\let\theoremstyle\relax
\newcommand{\union}{\cup}
\newcommand{\Union}{\bigcup}
\newcommand{\set}[1]{\mathcal #1}
\newcommand{\reals}{\mathbb R}
\newcommand{\real}{\mathbb R}
\newcommand{\interior}[1]{\mathrm{int}(#1)}
\newcommand{\closure}[1]{\mathrm{cl}(#1)}
\newcommand{\algvar}[1]{\text{\IfSubStr{#1}{_}{%
    \StrSubstitute{#1}{_}{\textunderscore}}{#1}}}
\newcommand{\grad}{\nabla}
\newcommand{\subdiff}[1][none]{%
  \ifthenelse{\equal{#1}{none}}{%
    \partial%
  }{%
    \partial_{#1}%
  }%
}
\newcommand{\normalcone}[2]{\set{N}_{#1}(#2)}
\DeclareMathOperator{\range}{range}
\DeclareMathOperator{\linspan}{span}
\newcommand{\diag}[2][short]{\mathrm{diag}\ifthenelse{\equal{#1}{short}}{(#2)}{\left(#2\right)}}
\newcommand{\norm}[2]{\|#1\|_{#2}}
\newcommand{\vertiii}[1]{{\left\vert\kern-0.25ex\left\vert\kern-0.25ex\left\vert #1 
        \right\vert\kern-0.25ex\right\vert\kern-0.25ex\right\vert}}
\newcommand{\Proj}[2]{\mathcal P_{#1}(#2)}
\newcommand{\PP}{$\mathcal{P}$\xspace}
\newcommand{\NPhard}{$\mathcal{NP}$-hard\xspace}
\newcommand{\transp}{{\scriptscriptstyle\mathsf{T}}}
\newcommand{\T}{^\transp}
\newcommand{\dd}{\mathrm{d}}
\newcommand{\fun}[2][1]{%
  #2(%
  \foreach \index in {1, ..., #1} {%
    \ifthenelse{\equal{\index}{#1}}{%
      \cdot%
    }{%
      \cdot,%
    }%
  })}
\newcommand{\definedas}[1][tri]{%
  \ifthenelse{\equal{#1}{tri}}{\triangleq}{\coloneqq}}
\newcommand{\inlinesum}{\textstyle\sum}
\DeclareMathOperator*{\exptx}{exp}
\renewcommand{\exp}[2][exponent]{\ifthenelse{\equal{#1}{exponent}}{e^{#2}}{\exptx\left(#2\right)}}
\DeclareMathOperator*{\expsf}{\mathsf{exp}}
\newcommand{\expm}[2][exponent]{\ifthenelse{\equal{#1}{exponent}}{\mathsf{e}^{#2}}{\expsf\left(#2\right)}}
\DeclareFontFamily{U}{tipa}{}
\DeclareFontShape{U}{tipa}{m}{n}{<->tipa10}{}
\newcommand{\arc@char}{{\usefont{U}{tipa}{m}{n}\symbol{62}}}%
\renewcommand{\arc}[1]{\mathpalette\arc@arc{#1}}
\newcommand{\arc@arc}[2]{%
  \sbox0{$\m@th#1#2$}%
  \vbox{
    \hbox{\resizebox{\wd0}{\height}{\arc@char}}
    \nointerlineskip
    \box0
  }%
}
\newcommand{\alev}[1]{\text{a.e. }#1}
\newcommand{\ev}[3][c]{%
  \ifthenelse{\equal{#1}{c}}{%
    \ifthenelse{\equal{#2}{}}{\forall[0,#3]}{\forall[#2,#3]}%
  }{%
    \ifthenelse{\equal{#1}{o}}{%
      \ifthenelse{\equal{#2}{}}{\forall(0,#3)}{\forall(#2,#3)}%
    }{%
      \ifthenelse{\equal{#1}{oc}}{%
        \ifthenelse{\equal{#2}{}}{\forall(0,#3]}{\forall(#2,#3]}%
      }{%
        \ifthenelse{\equal{#2}{}}{\forall[0,#3)}{\forall[#2,#3)}%
      }%
    }%
  }%
}
\DeclareMathOperator*{\argmax}{argmax}
\DeclareMathOperator*{\argmin}{argmin}
\def\opticmd{\min}
\newcommand{\optimal}[1]{#1^*}
\newcounter{l}
\newcounter{j}
\newcounter{k}
\newenvironment{taggedsubequations}[1]
{%
  \addtocounter{equation}{-1}%
  \begin{subequations}%
    \def\@currentlabel{#1}%
    %
  }
  {\end{subequations}}
\newcommand{\lloptimization}[7][center]{
  \setsepchar{\#}%
  \readlist\mylist{#6}
  \ifthenelse{\equal{#7}{}}{\begin{subequations}}{\begin{taggedsubequations}{#7}}
    \ifthenelse{\equal{#2}{}}{}{\label{eq:#2}}
    \ifthenelse{\equal{#1}{left}}{
      \begin{flalign}
        \ifthenelse{\equal{#2}{}}{}{\label{eq:#2_a}}
        \ifthenelse{\equal{#3}{}}{}{#3 = }
        &\opticmd_{\ifthenelse{\equal{#4}{}}{}{#4}}~ #5~\mathrm{s.t.}\hspace{-1mm} &&\\
        \forloop{k}{0}{\arabic{k} < \listlen\mylist[]}{
          \setcounter{j}{\value{k}+1}
          \setcounter{l}{\value{k}+2}
          \ifthenelse{\equal{#2}{}}{}{\label{eq:#2_\alph{l}}}
          \ifthenelse{\equal{\arabic{j}}{\listlen\mylist[]}}{%
            &\mylist[\arabic{j}]%
          }{%
            &\mylist[\arabic{j}] &&\\%
          }%
        }
      \end{flalign}
    }{\ifthenelse{\equal{#1}{center}}{
      \begin{align}
        \ifthenelse{\equal{#2}{}}{}{\label{eq:#2_a}}
        \ifthenelse{\equal{#3}{}}{}{#3 = }
        &\opticmd_{\ifthenelse{\equal{#4}{}}{}{#4}}~ #5~\mathrm{s.t.}\hspace{-1mm} &&\\
        \forloop{k}{0}{\arabic{k} < \listlen\mylist[]}{
          \setcounter{j}{\value{k}+1}
          \setcounter{l}{\value{k}+2}
          \ifthenelse{\equal{#2}{}}{}{\label{eq:#2_\alph{l}}}
          \ifthenelse{\equal{\arabic{j}}{\listlen\mylist[]}}{%
            &\mylist[\arabic{j}]%
          }{%
            &\mylist[\arabic{j}]\\%
          }%
        }
      \end{align}}{\ifthenelse{\equal{#1}{left*}}{
      \begin{flalign*}
        \ifthenelse{\equal{#3}{}}{}{#3 = }
        &\opticmd_{\ifthenelse{\equal{#4}{}}{}{#4}}~ #5~\mathrm{s.t.}\hspace{-1mm} &&\\
        \forloop{k}{0}{\arabic{k} < \listlen\mylist[]}{
          \setcounter{j}{\value{k}+1}
          \setcounter{l}{\value{k}+2}
          \ifthenelse{\equal{\arabic{j}}{\listlen\mylist[]}}{%
            &\mylist[\arabic{j}]%
          }{%
            &\mylist[\arabic{j}] &&\\%
          }%
        }
      \end{flalign*}
    }{\begin{align*}
        \ifthenelse{\equal{#3}{}}{}{#3 = }
        &\opticmd_{\ifthenelse{\equal{#4}{}}{}{#4}}~ #5~\mathrm{s.t.}\hspace{-1mm} &&\\
        \forloop{k}{0}{\arabic{k} < \listlen\mylist[]}{
          \setcounter{j}{\value{k}+1}
          \setcounter{l}{\value{k}+2}
          \ifthenelse{\equal{\arabic{j}}{\listlen\mylist[]}}{%
            &\mylist[\arabic{j}]%
          }{%
            &\mylist[\arabic{j}]\\%
          }%
        }
      \end{align*}}}
    }
  \ifthenelse{\equal{#7}{}}{\end{subequations}}{\end{taggedsubequations}}
}
\newcommand{\llpoptimization}[8][left]{
  \begin{problem}[#8]\ifthenelse{\equal{#2}{NoNe}}{}{\textbf{#2.}}%
    \ifthenelse{\equal{#3}{}}{}{\label{problem:#3}}%
    \lloptimization[#1]{#3}{#4}{#5}{#6}{#7}{#8}%
  \end{problem}%
}
\definecolor{darkolivegreen}{rgb}{0.33, 0.6, 0.18}
\definecolor{green}{rgb}{0, 0.5, 0}
\definecolor{orange}{rgb}{1, 0.4, 0}
\definecolor{darkgray}{rgb}{0.2, 0.2, 0.2}
\newcommand{\postmeeting}[2][show]{%
  \ifthenelse{\equal{#1}{show}}{%
    {\color{orange} #2}%
  }{}%
}
\newcommand{\idea}[2][show]{%
  \ifthenelse{\equal{#1}{show}}{%
    {\color{orange} #2}%
  }{}%
}
\newcommand{\todo}[2][show]{%
  \ifthenelse{\equal{#1}{show}}{%
    {\color{blue}%
      \ifthenelse{\equal{#2}{}}{TODO}{(TODO: #2)}}%
  }{}%
}
\newcommand{\question}[2][show]{%
  \ifthenelse{\equal{#1}{show}}{%
    {\color{darkolivegreen} (Q: #2)}%
  }{}%
}
\newcommand{\fixme}[2][show]{%
  \ifthenelse{\equal{#1}{show}}{%
    {\color{red} (FIXME: #2)}%
  }{}%
}
\newcommand{\mycomment}[2][show]{%
  \ifthenelse{\equal{#1}{show}}{%
    {\color{red} (C: #2)}%
  }{}%
}
\newcommand{\Behcet}{Beh\c{c}et\xspace}
\newcommand{\Acikmese}{A\c{c}{\i}kme\c{s}e\xspace}
\renewcommand{\algref}[3]{\ifthenelse{\equal{#2}{}}{Algorithm~\ref{alg:#1}\xspace}%
  {\ifthenelse{\equal{#3}{}}{line~\ref{alg:#1:line:#2} %
      of Algorithm~\ref{alg:#1}\xspace}{lines~%
      \ref{alg:#1:line:#2}-\ref{alg:#1:line:#3} %
      of Algorithm~\ref{alg:#1}\xspace}}}
\newcolumntype{C}{>{\centering\arraybackslash}X}
\newcolumntype{L}{>{\raggedright\arraybackslash}X}
\newcolumntype{R}{>{\raggedleft\arraybackslash}X}
\newcounter{theorem}
\newenvironment{theorem}[1]
{\refstepcounter{theorem}%
\protected@edef\@currentlabelname{\value{theorem}}%

\innertheorem}
{\endinnertheorem}
\newtheorem{lemma}{Lemma}
\theoremstyle{definition}
\newcounter{problem}
\newenvironment{problem}[1][]
{\ifthenelse{\equal{#1}{}}
{\refstepcounter{problem}%
\protected@edef\@currentlabelname{\value{problem}}%
\innerproblem}
{\innerproblem}}
{\endinnerproblem}
\newtheorem{definition}{Definition}
\newcounter{condition}
\newenvironment{condition}[2][]
{\refstepcounter{condition}%
\protected@edef\@currentlabelname{\value{condition}}%
\ifthenelse{\equal{#2}{}}{}
{}
\innercondition}
{\endinnercondition}
\newtheorem{assumption}{Assumption}
\theoremstyle{empty}
\declaretheoremstyle[
  headfont=\color{black}\normalfont\bfseries,
  bodyfont=\color{darkgray}\normalfont,
]{colored}
\newmdenv[
outerlinewidth = 1,%
linewidth = 0pt,%
roundcorner = 2pt,%
leftmargin = 20,%
rightmargin = 0,%
backgroundcolor = lightgray!10,%
outerlinecolor = gray!50,%
innertopmargin = \topskip,%
splittopskip = \topskip,%
frametitle = Summary,%
frametitlebelowskip = 0pt,%
]{summary_box}
\newmdenv[
outerlinewidth = 1,%
linewidth = 0pt,%
roundcorner = 2pt,%
leftmargin = 60,%
rightmargin = 60,%
backgroundcolor = yellow!40,%
outerlinecolor = black!50,%
innertopmargin = \topskip,%
splittopskip = \topskip,%
]{highlight_box}
\newif\ifnomentry
\renewcommand{\nomgroup}[1]{%
\nomentryfalse
\ifthenelse{\equal{#1}{V}}{\item[\textbf{Variables}]}{%
\ifthenelse{\equal{#1}{A}}{\item[\textbf{Abbreviations}]}{%
\ifthenelse{\equal{#1}{E}}{\item[\textbf{Equations}]}{}}}
\nomentrytrue
}
\newcommand{\defvar}[3][show]{\nomenclature[V]{#2}{#3}\ifthenelse{\equal{#1}{show}}{#2}{}}
\algrenewcommand\algorithmicindent{1em}
\algrenewcommand\alglinenumber[1]{{\sf\footnotesize#1}}
\algrenewcommand\algorithmicrequire{\textbf{Precondition:}}
\algrenewcommand\algorithmicensure{\textbf{Postcondition:}}
\algnewcommand\algorithmicinput{\textbf{Input:}}
\algnewcommand\Input{\item[\algorithmicinput]}
\algnewcommand\algorithmicfunctions{\textbf{Lambda Functions:}}
\algnewcommand\Functions{\item[\algorithmicfunctions]}
\algnewcommand\Stop[1][]{\State \textbf{STOP} {\color{green} \ul{#1}}}
\algnewcommand\Break{\State \textbf{break}}
\algnewcommand\Continue{\State \textbf{continue}\xspace}
\algnewcommand\Error[1][]{\State \textbf{error} {\color{red} \uwave{#1}}}
\algrenewcommand\algorithmiccomment[1]{\hfill {\color{gray} \(\triangleright\) #1}}
\algnewcommand\Section[1]{\Statex \hskip\ALG@thistlm \textbf{\color{gray} \(\triangleright\) #1}}
\algnewcommand{\algorithmicparameters}{\textbf{parameters}}
\newcommand{\algorithmiclambda}[3][]{\ifthenelse{\equal{#1}{}}{\texttt{#2}(#3)}{$\texttt{#2}\gets\textbf{lambda}~#3:~#1$}}
\newcommand{\runningku}{\zeta}
\newcommand{\tzero}{0}
\begin{document}

\begin{frontmatter}
  \title{Lossless Convexification of Optimal Control Problems with
    Semi-continuous Inputs}

  
  \author[Author]{Danylo Malyuta}
  \author[Author]{\Behcet \Acikmese}

  \address[Author]{Dept. of Aeronautics \& Astronautics,
    University of Washington, Seattle, WA 98195 USA (e-mail:
    \texttt{\{danylo,behcet\}@uw.edu})}

  \begin{abstract}                
    This paper presents a novel convex optimization-based method for finding the
    globally optimal solutions of a class of mixed-integer non-convex optimal
    control problems. We consider problems with non-convex constraints that
    restrict the input norms to be either zero or lower- and upper-bounded. The
    non-convex problem is relaxed to a convex one whose optimal solution is
    proved to be optimal almost everywhere for the original problem, a procedure
    known as lossless convexification. This paper is the first to allow
    individual input sets to overlap and to have different norm bounds, integral
    input and state costs, and convex state constraints that can be activated at
    discrete time instances. The solution relies on second-order cone
    programming and demonstrates that a meaningful class of optimal control
    problems with binary variables can be solved reliably and in polynomial
    time. A rocket landing example with a coupled thrust-gimbal constraint
    corroborates the effectiveness of the approach.
  \end{abstract}

  \begin{keyword}
    Optimal control,~%
    convex optimization,~%
    maximum principle,~%
    integer programming.
  \end{keyword}
\end{frontmatter}

\section{Introduction}

We present a convex programming solution to a class of optimal control problems
with semi-continuous control input norms. Semi-continuous variables are a
particular type of binary non-convexity.

\begin{definition}
  \label{definition:semicontinuous}
  Variable $x\in\reals$ is \textit{semi-continuous} if $x\in \{0\}\union[a,b]$
  with $0<a\le b$ \cite{MosekCookbook2019}.
\end{definition}

The constraint $az\le x\le bz$ with $z\in\{0,1\}$ models
semi-continuity. Practical rocket landing and spacecraft rendezvous path
planning problems include such constraints, and can take hours to solve using
existing mixed-integer convex programming (MICP) methods. In this paper, we
propose an algorithm based on lossless convexification that solves these
problems to global optimality in seconds.

Lossless convexification is a method for finding the globally optimal solution
of non-convex problems using convex optimization. The method relaxes the
original problem to a convex one via a slack variable, enabling the use of
second-order cone programming (SOCP). The maximum principle is used to prove
that the solution of the relaxed problem is globally optimal for the original
problem.

Classical lossless convexification deals with non-convexity in the form of an
input norm lower-bound. The first result was introduced in \cite{Acikmese2007}
for minimum-fuel rocket landing and was later expanded to more general
non-convex input sets \cite{Acikmese2011}. Extensions of the method were
introduced in \cite{Blackmore2010,
  Carson2011,Acikmese2013} to handle minimum-error rocket landing and non-convex
pointing constraints. More recently, lossless convexification was shown to
handle affine and quadratic state constraints \cite{Harris2013a,Harris2013b},
culminating in \cite{Harris2014}.

A recurring assumption of classical lossless convexification is that there is a
single input which cannot be turned off. Our interest is in problems that have
multiple inputs which may be turned off. When active, the input norm is
lower-bounded, making it semi-continuous in the sense of
Definition~\ref{definition:semicontinuous}. This is a richer binary
non-convexity than what was handled by classical lossless convexification.

The concept of lossless convexification with binary variables implemented via
MICP was explored in \cite{Blackmore2012,Zhang2017}. However, the \NPhard nature
of MICP generally makes the approach computationally expensive. Recently, a
limited class of binary non-convexity was handled via lossless convexification
in \cite{Malyuta2019}, proving that a class of \NPhard problems is of \PP
complexity. The approach is amenable to real-time onboard optimization for
autonomous systems and for rapid design trade studies.

Our main contribution is to extend the lossless convexification result of
\cite{Malyuta2019}. The list of extensions that we introduce is as follows. We
allow an input integral cost, a state integral cost, different norm lower- and
upper-bounds for each input, overlapping pointing directions of the inputs, and
state constraints.

The paper is organized as follows. Section~\ref{section:problem_statement}
defines the class of optimal control problems that our method
handles. Section~\ref{section:lcvx} proposes our solution method based on
lossless convexification. Section~\ref{section:lcvx_proof} proves that our
method finds the globally optimal solution based on the necessary conditions of
optimality presented in
Section~\ref{section:maximum_principle}. Section~\ref{section:example} presents
a rocket landing example which corroborates the method's effectiveness for
practical path planning applications. Section~\ref{section:future_work} outlines
future work and Section~\ref{section:conclusion} summarizes the result.


\textit{Notation}: sets are calligraphic, e.g. $\mathcal S$. Set $\reals^n_{-}$
denotes the $n$-dimensional non-positive orthant. The operator $\circ$ denotes
the element-wise product. Given a function
$f:\reals^n\times\reals^m\to\reals^p$, we use the shorthand
$f[t]\equiv f(x(t),y(t))$. In text, functions are referred to by their letter
(e.g. $f$) and conflicts with another variable are to be understood from
context. The gradient of $f$ with respect an argument $x$ is denoted
$\grad_x f\in\reals^{p\times n}$. Similarly, if $f$ is nonsmooth then its
subdifferential with respect to $x$ is
$\subdiff[x] f\subseteq\reals^{1\times n}$. The normal cone at $x$ to
$\mathcal S\subseteq\reals^n$ is denoted
$\normalcone{\mathcal S}{x}\subseteq\reals^n$. When we refer to an
\textit{interval}, we mean some time interval $[t_1,t_2]$ of non-zero duration,
i.e. $t_1<t_2$. We call the Eucledian projection of $y\in\reals^n$ onto
$\set S\subseteq\reals^n$ the magnitude of the 2-norm projection of $y$:
\begin{eqnarray}
  \label{eq:eucledian_projection}
  \Proj{\set S}{y}\definedas\big\|\textstyle\argmin_{z\in\set S}\norm{y-z}{2}\big\|_2.
\end{eqnarray}

\section{Problem Definition}
\label{section:problem_statement}

This section presents the class of optimal control problems that can be solved
via convex optimization by our method. We consider mixed-integer non-convex
optimal control problems with linear time-invariant (LTI) dynamics and
semi-continuous input norms:
\begin{poptimization}{center}{ocp}{}{u_i,\gamma_i,t_f}{%
    \begin{array}{l}
      m(t_f,x(t_f))+\int_{0}^{t_f}\ell(x(t))+ \\
      \runningku\inlinesum_{i=1}^M\|u_i(t)\|_2\dd t
    \end{array}}{$\mathcal O$}
  \dot x(t) = Ax(t)+B\inlinesum_{i=1}^M{u_i(t)}+w,~x(0)=x_0, \#
  \gamma_i(t)\rho_1^i \le \|u_i(t)\|_2\le \gamma_i(t)\rho_2^i\quad
  i=1,\dots,M, \#
  \gamma_i(t)\in \{0,1\}\quad i=1,\dots,M, \#
  \inlinesum_{i=1}^M\gamma_i(t) \le K, \#
  C_iu_i(t)\le 0\quad i=1,\dots,M, \#
  x(t)\in\mathcal X, \#
  b(x(t_f)) = 0,
\end{poptimization}
where $x(t)\in\reals^n$ is the state, $u_i(t)\in\reals^m$ is the $i$-th input,
and $w\in\real^n$ is a known external input. Convex functions
$m:\reals\times\reals^n\to\reals$, $\ell:\reals^n\to\reals$ and
$b:\reals^n\to\reals^{n_b}$ define the terminal cost, the state running cost and
the terminal manifold respectively. The binary coefficient
$\runningku\in\{0,1\}$ toggles the input running cost. The state must lie in the
convex set $\mathcal X\subseteq\reals^n$. The input directions are constrained
to polytopic cones called \textit{input pointing sets}:
\begin{equation}
  \label{eq:input_pointing_set}
  \mathcal U_i\definedas \{u\in\reals^m: C_iu\le 0\},
\end{equation}
where $C_i\in\reals^{p_i\times m}$ is a matrix with $C_{i,j}$ the $j$-th row.

\begin{assumption}
  \label{ass:full_range_cone}

  Matrices $C_i$ in \eqref{eq:ocp_f} are full row rank.
\end{assumption}

\begin{assumption}
  \label{ass:norm_bounds}
  The control norm bounds in \eqref{eq:ocp_c} are distinct,
  i.e. $\rho_1^i<\rho_2^i$.
\end{assumption}

Problem~\ref{problem:ocp} extends the problem class in \cite{Malyuta2019} in
several non-trivial ways. First, there are input and state integral costs in
\eqref{eq:ocp_a}. Second, the input norm bounds in \eqref{eq:ocp_c} can be
different for each input. Third, the state can be constrained to a convex set in
\eqref{eq:ocp_g}. Last and most important, \cite[Assumption~1]{Malyuta2019} is
removed, such that the input pointing sets can overlap
arbitrarily. Figure~\ref{fig:input_set_overlap} shows how this enables richer
input set geometry than permitted in \cite{Malyuta2019}.

\begin{figure}
  \centering
  \begin{subfigure}[b]{0.51\columnwidth}
    \centering
    \includegraphics[width=0.923\textwidth]{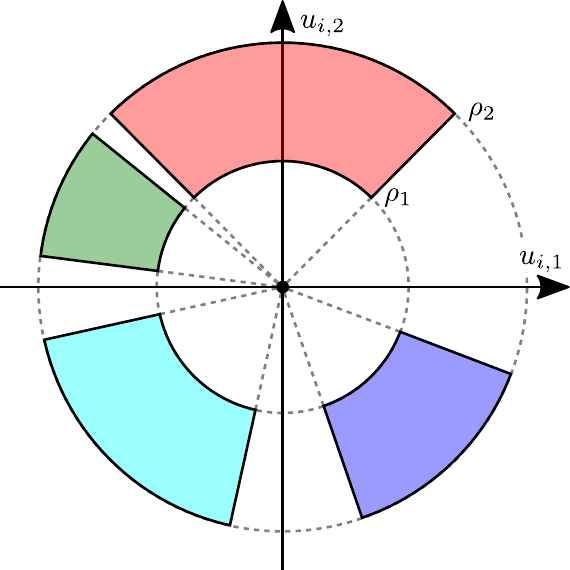}
    \caption{Disjoint same-bound input sets allowed by \cite{Malyuta2019}.}
    \label{fig:input_set_overlap_old}
  \end{subfigure}
  \hfill
  \begin{subfigure}[b]{0.47\columnwidth}
    \centering
    \includegraphics[width=1\textwidth]{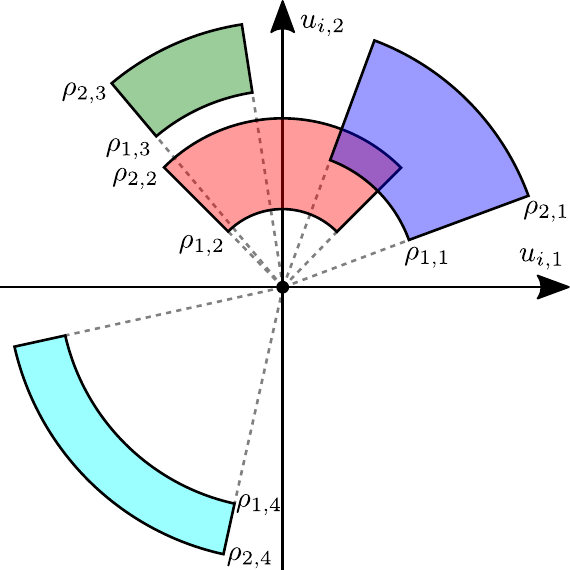}
    \caption{Overlapping multiple-bound input sets allowed in this paper.}
    \label{fig:input_set_overlap_new}
  \end{subfigure}
  \caption{Input set example for $m=2$. As shown in
    (\protect\subref{fig:input_set_overlap_old}), \cite{Malyuta2019} does not
    allow different input norm bounds or overlapping input pointing sets. As
    shown in (\protect\subref{fig:input_set_overlap_new}), this paper allows
    both features.}
  \label{fig:input_set_overlap}
\end{figure}

\section{Lossless Convexification}
\label{section:lcvx}

This section presents the two main results, Theorems~\ref{theorem:lcvx_a} and
\ref{theorem:lcvx_b}, which state that the convex Problem~\ref{problem:rcp}
finds the global optimum of Problem~\ref{problem:ocp} under certain conditions.

The input magnitude in Problem~\ref{problem:ocp} is semi-continuous, i.e.
$\|u_i(t)\|_2\in\{0\}\union[\rho_1^i,\rho_2^i]$. This makes the problem
mixed-integer and non-convex, which is readily apparent from
Figure~\ref{fig:input_set_overlap}. Consider the following convex relaxation:
\begin{poptimization}{center}{rcp}{}{u_i,\gamma_i,\sigma_i,t_f}{%
    \begin{array}{l}
      m(t_f,x(t_f))+\runningku\xi(t_f)+ \\
      \int_{0}^{t_f}\ell(x(t))\dd t
    \end{array}}{$\mathcal R$}
  \dot x(t) = Ax(t)+B\inlinesum_{i=1}^M{u_i(t)}+w,~x(0)=x_0, \#
  \dot \xi(t) = \inlinesum_{i=1}^M\sigma_i(t), \#
  \gamma_i(t)\rho_1^i \le \sigma_i(t)\le \gamma_i(t)\rho_2^i\quad
  i=1,\dots,M, \#
  \|u_i(t)\|_2\le\sigma_i(t)\quad i=1,\dots,M, \#
  0\le \gamma_i(t)\le 1\quad i=1,\dots,M, \#
  \inlinesum_{i=1}^M\gamma_i(t) \le K, \#
  C_iu_i(t)\le 0\quad i=1,\dots,M, \#
  x(t)\in\mathcal X, \#
  b(x(t_f)) = 0.
\end{poptimization}

Replacing \eqref{eq:ocp_c}-\eqref{eq:ocp_d} with
\eqref{eq:rcp_d}-\eqref{eq:rcp_f} convexifies the input set of
Problem~\ref{eq:ocp}. Figure~\ref{fig:input_set_relaxation} illustrates an
example.

\begin{figure*}
  \centering
  \begin{subfigure}[b]{0.32\textwidth}
    \centering
    \includegraphics[width=0.75\columnwidth]{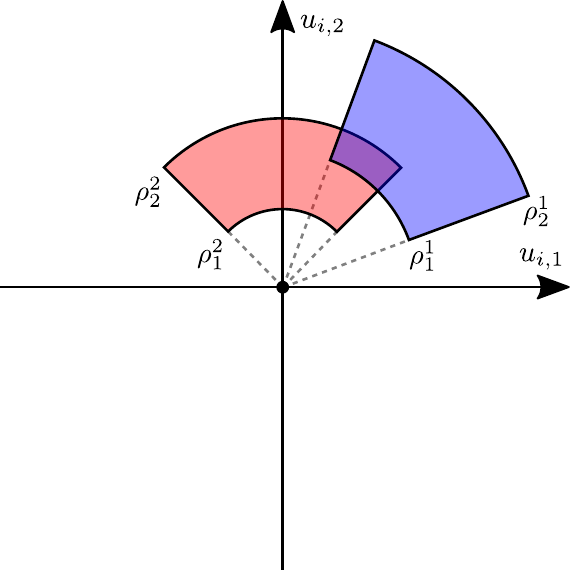}
    \caption{Original non-convex input set defined by
      \eqref{eq:ocp_c}-\eqref{eq:ocp_f}.}
    \label{fig:input_set_relaxation_1}
  \end{subfigure}%
  \hfill%
  \begin{subfigure}[b]{0.32\textwidth}
    \centering
    \includegraphics[width=1\columnwidth]{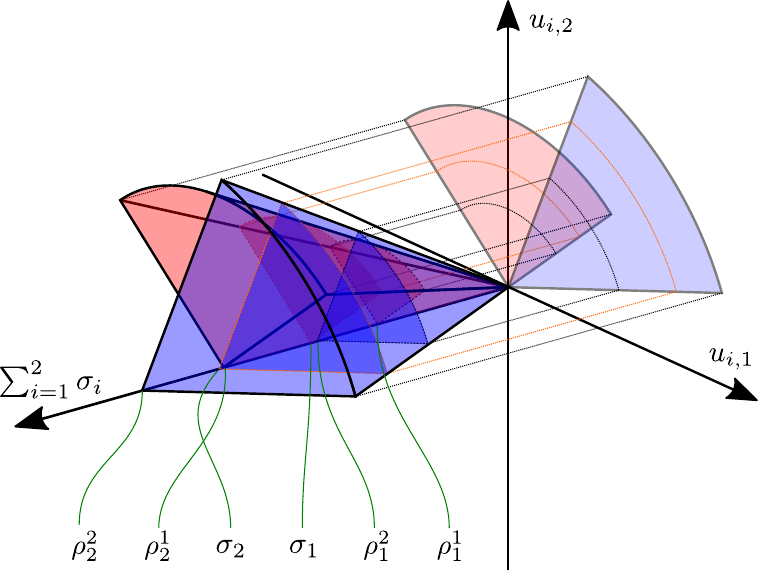}
    \caption{Non-convexity of individual input sets is removed by relaxing
      \eqref{eq:ocp_c} to \eqref{eq:rcp_d}-\eqref{eq:rcp_e}.}
    \label{fig:input_set_relaxation_2}
  \end{subfigure}%
  \hfill%
  \begin{subfigure}[b]{0.32\textwidth}
    \centering
    \includegraphics[width=1\columnwidth]{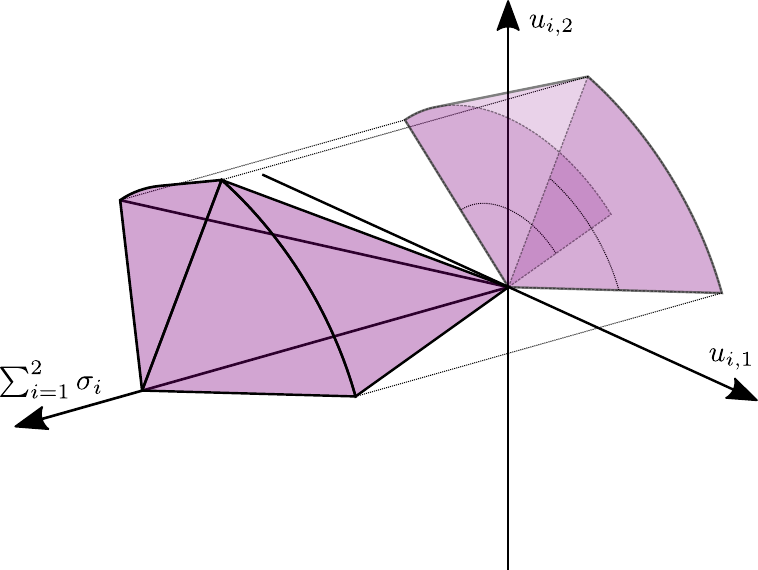}
    \caption{Semi-continuity of the input norm is convexified by relaxing
      \eqref{eq:ocp_d} to \eqref{eq:rcp_f}.}
    \label{fig:input_set_relaxation_3}
  \end{subfigure}
  \caption{Problem~\ref{problem:rcp} convexifies the input set of
    Problem~\ref{problem:ocp}, here shown for $M=2$, $K=1$ and $m=2$. The
    relaxation consists of three steps:
    \protect\subref{fig:input_set_relaxation_1})
    \eqref{eq:ocp_c}-\eqref{eq:ocp_f} originally define a non-convex set of a
    binary nature; \protect\subref{fig:input_set_relaxation_2}) by relaxing
    \eqref{eq:ocp_c} to \eqref{eq:rcp_d}-\eqref{eq:rcp_e}, individual input sets
    are convexified; \protect\subref{fig:input_set_relaxation_3}) by relaxing
    \eqref{eq:ocp_d} to \eqref{eq:rcp_f}, a convex hull is obtained.}
  \label{fig:input_set_relaxation}
\end{figure*}

Consider the following conditions which remove degenerate solutions of
Problem~\ref{problem:rcp} that may be infeasible for
Problem~\ref{problem:ocp}. The conditions use an \textit{adjoint system} whose
output $y(t)\in\reals^m$ is called the \textit{primer vector}:
\begin{subequations}
  \label{eq:adjoint_system}
  \begin{align}
    \dot\lambda(t) &= -A\T\lambda(t)+v(t),~v(t)\in\subdiff\ell(x(t))\T, \label{eq:adjoint_dynamics} \\
    y(t) &= B\T\lambda(t). \label{eq:primer_vector}
  \end{align}
\end{subequations}

It will be seen in Section~\ref{section:lcvx_proof} that we are interested in
``how much'' $y(t)$ projects onto the $i$-th input pointing set. This is given
by the following input \textit{gain} measure:
\begin{equation}
  \label{eq:input_gain}
  \Gamma_i(t) \definedas (\Proj{\set{U_i}}{y(t)}-\runningku)\rho_2^i.
\end{equation}

\begin{condition}{}
  \label{condition:observability}
  The adjoint system \eqref{eq:adjoint_system} is strongly observable
  \cite{Trentelman2001}.
\end{condition}

\begin{condition}{}
  \label{condition:normality}
  The adjoint system \eqref{eq:adjoint_system} and pointing cone geometry
  \eqref{eq:ocp_f} satisfy either:
  \begin{enumerate}
  \item[(a)] $\Gamma_i(t)\ne 0~\alev{[0,t_f]}$ $\forall i$
    s.t. $y(t)\notin\interior{\normalcone{\set U_i}{0}}$;
  \item[(b)] on any interval where $\Gamma_i(t)=0$, $\Gamma_j(t)>0$ for at least
    $K$ other inputs.
  \end{enumerate}
\end{condition}

\begin{condition}{}
  \label{condition:ambiguity}
  The adjoint system \eqref{eq:adjoint_system} and pointing cone geometry
  \eqref{eq:ocp_f} satisfy either:
  \begin{enumerate}
  \item[(a)] $\Gamma_i(t)\ne \Gamma_j(t)~\alev{[0,t_f]}$ $\forall i$
    s.t. $y(t)\notin\interior{\normalcone{\set U_i}{0}}$;
  \item[(b)] on any interval where $\Gamma_i(t)=\Gamma_j(t)$, there exist $K$
    inputs with $\Gamma_k(t)>\Gamma_i(t)$ or $M-K$ inputs where
    $\Gamma_k(t)<\Gamma_i(t)$.
  \end{enumerate}
\end{condition}

\begin{condition}{}
  \label{condition:transversality}
  $\ell[t]+\runningku\sum_{i=1}^M\sigma_i(t)+\grad_t m[t_f]\ne 0$
  $\forall t\in [0,t_f)$.
\end{condition}

We now state the two main results of this paper, which claim that
Problem~\ref{problem:rcp} solves Problem~\ref{problem:ocp} under certain
conditions. The theorems are proved in Section~\ref{section:lcvx_proof}.

\begin{theorem}{a}
  \label{theorem:lcvx_a}
  The solution of Problem~\ref{problem:rcp} is globally optimal $\alev{[0,t_f]}$
  for Problem~\ref{problem:ocp} if
  Conditions~\ref{condition:observability}-\ref{condition:transversality} hold
  and the state constraint \eqref{eq:ocp_g} is never activated.
\end{theorem}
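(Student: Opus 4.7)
The plan is to apply Pontryagin's maximum principle to the convex relaxation Problem~\ref{problem:rcp} (the form given in Section~\ref{section:maximum_principle}) and show that every solution satisfying the necessary conditions automatically obeys the non-convex constraints \eqref{eq:ocp_c}--\eqref{eq:ocp_d} of Problem~\ref{problem:ocp} almost everywhere on $[0,t_f]$. Since $\mathcal R$ is a relaxation of $\mathcal O$ --- and, because \eqref{eq:ocp_g} is never active, the adjoint dynamics reduce to the clean form \eqref{eq:adjoint_system} with no normal-cone contribution from the state constraint --- its optimal value lower-bounds $\mathcal O$'s, so exhibiting an $\mathcal R$-optimizer that is $\mathcal O$-feasible proves global optimality for $\mathcal O$.

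First I would pointwise maximize the Hamiltonian of Problem~\ref{problem:rcp} in $(u_i,\sigma_i,\gamma_i)$. The $\xi$-costate is constant and equals $\runningku$ by transversality on $\xi(t_f)$, so after eliminating it the input-dependent part of the Hamiltonian collapses (up to a global sign) to $\sum_{i=1}^M \bigl(y(t)^\transp u_i - \runningku\,\sigma_i\bigr)$ with primer vector $y=B^\transp\lambda$. Using the conic-projection identity $\max_{w\in\set K,\,\|w\|_2\le 1} z^\transp w = \Proj{\set K}{z}$ valid for any closed convex cone $\set K$, the inner maximization over $u_i\in\set U_i$ with $\|u_i\|_2\le\sigma_i$ selects $u_i$ collinear with the projection of $y$ onto $\set U_i$, and whenever that projection is nonzero saturates the norm bound so that $\|u_i(t)\|_2=\sigma_i(t)$. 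This first key step already tightens the SOCP relaxation \eqref{eq:rcp_e}.

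Substituting this value, the Hamiltonian reduces to $\sum_i \sigma_i\bigl(\Proj{\set U_i}{y(t)}-\runningku\bigr)$, so the optimal $\sigma_i$ is driven to one of its box endpoints $\gamma_i\rho_1^i$ or $\gamma_i\rho_2^i$ according to the sign of the coefficient; once $\sigma_i$ is eliminated, the residual linear function of $\gamma_i$ has coefficient proportional to $\Gamma_i(t)$. The remaining maximization over $\gamma\in[0,1]^M$ with $\sum_i\gamma_i\le K$ is a fractional-knapsack LP whose extreme-point optimum is binary unless two coefficients tie or a coefficient vanishes on an interval of positive measure --- exactly the pathologies excluded by Conditions~\ref{condition:normality} and~\ref{condition:ambiguity}, with the carve-out for $y\in\interior{\normalcone{\set U_i}{0}}$, where that input's projection is zero, $\gamma_i=0$ is forced, and the ambiguity does not matter. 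Concluding $\gamma_i(t)\in\{0,1\}$ almost everywhere is the main obstacle: one must show that intervals on which the LP admits a fractional optimum have measure zero, and here Condition~\ref{condition:observability} is essential to prevent $\lambda$ (and hence $y$) from vanishing identically on any subinterval, which would otherwise let all $\Gamma_i$ be simultaneously indeterminate.

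Condition~\ref{condition:transversality} then rules out a degenerate free-final-time stationarity that would otherwise permit solutions of arbitrary $t_f$. Because $\mathcal R$ is convex, the maximum principle is sufficient as well as necessary, so the binary $\gamma_i$ together with the tight norm equality $\|u_i\|_2=\sigma_i$ certify that the optimizer is $\mathcal O$-feasible. The two costs coincide at this solution because $\xi(t_f)=\int_0^{t_f}\sum_i\sigma_i\dt=\int_0^{t_f}\sum_i\|u_i(t)\|_2\dt$, so the lower bound achieved by $\mathcal R$ is attained within the $\mathcal O$-feasible set, yielding global optimality for Problem~\ref{problem:ocp} almost everywhere on $[0,t_f]$.
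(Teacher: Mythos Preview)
Your overall plan matches the paper's: apply the maximum principle to the relaxed problem, show the pointwise Hamiltonian maximizer forces $\|u_i\|_2=\sigma_i$ and $\gamma_i\in\{0,1\}$ almost everywhere, then close with the standard relaxation sandwich $J_{\mathcal R}^*\le J_{\mathcal O}^*\le J_{\mathcal R}^*$. Your direct primal maximization of the Hamiltonian (via the conic-projection identity and the knapsack LP in $\gamma$) is equivalent to the paper's Lagrangian-duality analysis of the same pointwise problem; both lead to the gain $\Gamma_i$ and the same invocation of Conditions~\ref{condition:normality}--\ref{condition:ambiguity}.

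There is, however, a genuine gap in how you use Conditions~\ref{condition:observability} and~\ref{condition:transversality}. You write that the $\xi$-costate ``equals $\runningku$ by transversality'' and later that Condition~\ref{condition:observability} alone ``prevents $\lambda$ from vanishing on any subinterval,'' with Condition~\ref{condition:transversality} relegated to ruling out ``degenerate free-final-time stationarity.'' This misidentifies the logical role of both conditions. Transversality gives $\eta\equiv\alpha\runningku$ and $H\equiv-\alpha\grad_t m[t_f]$, not $\eta\equiv\runningku$; you cannot normalize $\alpha=-1$ until you have excluded the abnormal case $\alpha=0$. Strong observability by itself does not yield a contradiction from $y\equiv 0$ on an interval: it only forces $\lambda(\tau_1)=0$, which is perfectly consistent with non-triviality if $\alpha\ne 0$ (since then $\eta=\alpha\runningku$ may be nonzero) or if $\alpha=0$ but $\psi\ne 0$ elsewhere. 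The paper's argument chains the two conditions: $y\equiv 0$ on $[\tau_1,\tau_2]$ plus Condition~\ref{condition:observability} gives $\lambda(\tau_1)=0$; substituting into the constant Hamiltonian yields $\alpha\bigl(\ell[\tau_1]+\runningku\sum_i\sigma_i(\tau_1)+\grad_t m[t_f]\bigr)=0$; Condition~\ref{condition:transversality} then forces $\alpha=0$, whence $\eta=0$ and $(\alpha,\psi(\tau_1))=0$, contradicting non-triviality. Only after this can you set $\alpha=-1$ and proceed with the Hamiltonian maximization as you describe. Without this step, the reduction of the input-dependent Hamiltonian to $\sum_i(y^\transp u_i-\runningku\sigma_i)$ is unjustified when $\alpha=0$, and your knapsack argument does not get off the ground.

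The remark that ``because $\mathcal R$ is convex, the maximum principle is sufficient as well as necessary'' is unnecessary and not quite correct for free-final-time problems; the paper (and your own last paragraph) instead uses the relaxation inequality, which is the clean way to conclude.
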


\addtocounter{theorem}{-1}

\begin{theorem}{b}
  \label{theorem:lcvx_b}
  The solution of Problem~\ref{problem:rcp} is globally optimal $\alev{[0,t_f]}$
  for Problem~\ref{problem:ocp} if
  Conditions~\ref{condition:observability}-\ref{condition:transversality} hold
  and the state constraint \eqref{eq:ocp_g} is activated at discrete times.
\end{theorem}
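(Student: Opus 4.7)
The plan is to reduce Theorem~\ref{theorem:lcvx_b} to Theorem~\ref{theorem:lcvx_a} by exploiting the fact that ``activated at discrete times'' means the contact set $\set T\definedas\{t\in[0,t_f]:x(t)\in\bdry{\set X}\}$ has Lebesgue measure zero. First I would invoke the maximum principle for state-constrained optimal control applied to Problem~\ref{problem:rcp}. This augments the adjoint system \eqref{eq:adjoint_system} by a non-negative Borel measure $\mu$ supported on $\set T$: the costate $\lambda$ becomes a function of bounded variation, absolutely continuous on $[0,t_f]\setminus\set T$ and satisfying jump conditions of the form $\lambda(t_k^+)-\lambda(t_k^-)\in-\normalcone{\set X}{x(t_k)}$ at each $t_k\in\set T$. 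On the complement of $\set T$, the adjoint dynamics reduce exactly to \eqref{eq:adjoint_dynamics}, so the primer vector $y(t)=B\T\lambda(t)$ agrees almost everywhere with the object analyzed in the unconstrained case.

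Next I would argue that the entire chain of reasoning that proved Theorem~\ref{theorem:lcvx_a} carries over unchanged on $[0,t_f]\setminus\set T$. Conditions~\ref{condition:observability}--\ref{condition:ambiguity} are stated $\alev{[0,t_f]}$ and are therefore invariant under modifications of $\lambda$, $y$, and $\Gamma_i$ on a measure-zero set. The Hamiltonian maximization over the relaxed input set yields the pointwise bang--off--bang structure of the primer-vector policy, and Conditions~\ref{condition:normality}--\ref{condition:ambiguity} preclude the degenerate subarcs where $\sigma_i(t)$ would separate strictly from $\|u_i(t)\|_2$ or $\gamma_i(t)$ would lie in the interior of $[0,1]$. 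The non-triviality of the adjoint on any open subinterval free of contact times follows from Condition~\ref{condition:observability}, since strong observability forbids $y$ from vanishing on a set of positive measure even after the costate is perturbed by finitely many jumps.

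The main obstacle is showing that the state-constraint multiplier $\mu$ does not invalidate Condition~\ref{condition:transversality} nor inject spurious structure into the primer vector. Because $\mu$ is concentrated on the countable set $\set T$, its contribution to the Hamiltonian and to $y$ vanishes almost everywhere, so the gain functions $\Gamma_i$ in \eqref{eq:input_gain} retain all their required $\alev{[0,t_f]}$ properties; the transversality condition in turn remains effective because it is a statement on $[0,t_f)$ modulo a measure-zero set. A secondary concern is ruling out the possibility that the Lagrange multipliers degenerate to zero when the state constraint is active; this is again handled by Condition~\ref{condition:observability}, which propagates nontriviality across the jump points.

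Combining these observations with the conclusion of Theorem~\ref{theorem:lcvx_a}, the optimal relaxed control satisfies $\|u_i(t)\|_2=\sigma_i(t)\in\{0\}\union[\rho_1^i,\rho_2^i]$ and $\gamma_i(t)\in\{0,1\}$ almost everywhere on $[0,t_f]$, making it feasible for Problem~\ref{problem:ocp}. Since Problem~\ref{problem:rcp} is a relaxation of Problem~\ref{problem:ocp} with identical objective value at any feasible point of the latter, this feasible solution is globally optimal $\alev{[0,t_f]}$, which establishes the theorem.
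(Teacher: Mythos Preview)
Your strategy---invoke the state-constrained maximum principle directly on Problem~\ref{problem:rcp}, absorb the constraint multiplier into a measure supported on the discrete contact set, and then rerun the Lemma~\ref{lemma:lcvx} analysis almost everywhere---is different from what the paper does. The paper never touches the state-constrained maximum principle. Instead it uses a \emph{subproblem splicing} argument: between two consecutive contact times $\tau_1<\tau_2$, it restricts attention to a compact subinterval $[\tau_e,\tau_f]\subset(\tau_1,\tau_2)$ and solves Problem~\ref{problem:rcp} anew on that subinterval with the boundary values inherited from the global optimizer. A standard optimality-of-restriction argument (if the restriction were suboptimal, splicing in the subproblem solution would beat the original) forces the subproblem solution to match the original on $[\tau_e,\tau_f]$. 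Since the state is strictly interior on this subinterval, Theorem~\ref{theorem:lcvx_a} applies verbatim to the subproblem, giving feasibility for Problem~\ref{problem:ocp} on $[\tau_e,\tau_f]$. Letting $\tau_e\to\tau_1$, $\tau_f\to\tau_2$ and taking the union over all such intervals gives the a.e.\ result, because the contact set has measure zero.

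The advantage of the paper's route is that Lemma~\ref{lemma:lcvx} is reused without modification: the subproblem has no active state constraints, so the unconstrained adjoint dynamics \eqref{eq:proof_dynamics}, the Hamiltonian constancy \eqref{eq:hamiltonian_constant}, and the non-triviality contradiction all apply directly. Your route has to re-establish each of these in the presence of the measure $\mu$, and the proposal glosses over two genuine difficulties. First, in the state-constrained maximum principle the non-triviality condition is $(\alpha,\psi,\mu)\ne 0$, not $(\alpha,\psi(t))\ne 0$ pointwise; if $y\equiv 0$ on some interval between contacts, strong observability gives $\lambda\equiv 0$ there and Condition~\ref{condition:transversality} forces $\alpha=0$, but this no longer contradicts non-triviality because $\mu$ (or $\lambda$ on another arc, after a jump) can still be nonzero. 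Your sentence ``Condition~\ref{condition:observability} propagates nontriviality across the jump points'' is precisely the step that needs an argument and does not follow from strong observability alone. Second, the chain that links Condition~\ref{condition:transversality} to $\alpha\ne 0$ goes through \eqref{eq:hamiltonian_constant}, i.e., $H[t]=-\nabla_t m[t_f]\alpha$ for all $t$; with a BV costate and atoms in $\mu$ you must justify why the Hamiltonian value on an interior arc still equals the terminal transversality value, rather than differing by the cumulative effect of the jumps. The paper's subinterval reformulation avoids both issues because it manufactures a fresh unconstrained problem (with its own terminal condition and its own non-triviality) on each arc.
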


\subsection{Discussion on Strong Observability}
\label{subsec:discussion_observability}

This section describes Condition~\ref{condition:observability} and its
verification. Strong observability extends the concept of observability to the
case of non-zero inputs. A strongly observable system does not have transmission
zeroes. To be precise, let us state strong observability in the context of
\eqref{eq:adjoint_system}.

\begin{definition}[{\citealp[Definition~7.8]{Trentelman2001}}]
  A point $\lambda_0\in\reals^n$ is \textit{weakly unobservable} if there exists
  an interval $\set{T}=[\tau_1,\tau_2]$ and an input trajectory
  $v(t)\in\subdiff\ell[t]\T$ for $t\in\set{T}$ such that if
  $\lambda(\tau_1)=\lambda_0$ then the primer vector satisfies $y(t)=0$
  $\forall t\in\set{T}$. The set of all weakly unobservable points is denoted
  $\set{V}$, which is called the weakly unobservable set.
\end{definition}

\begin{theorem}{}[{\citealp[Theorem~7.16]{Trentelman2001}}]
  \label{theorem:strong_obsv}
  The adjoint system \eqref{eq:adjoint_system} is strongly observable if
  $\set{V}=\{0\}$.
\end{theorem}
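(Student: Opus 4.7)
The plan is to establish the result by contrapositive, exploiting the linearity of the adjoint system \eqref{eq:adjoint_system}. Strong observability, in this context, means that from the primer vector $y$ observed on any interval $\set{T}$ one can uniquely recover the initial state $\lambda(\tau_1)$, independent of the disturbance trajectory $v$ driving \eqref{eq:adjoint_dynamics}. Writing the negation in these terms reduces matters to producing a nonzero weakly unobservable point whenever observability fails.

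First I would assume the adjoint system is not strongly observable and exhibit a nontrivial element of $\set{V}$. By the negation of strong observability, there exist an interval $\set{T}=[\tau_1,\tau_2]$, two distinct initial conditions $\lambda_a\ne\lambda_b$, and two admissible input signals $v_a,v_b$ such that the resulting primer vector trajectories $y_a(t)$ and $y_b(t)$ coincide on $\set{T}$. Using linearity of \eqref{eq:adjoint_dynamics}-\eqref{eq:primer_vector}, the trajectory with initial condition $\lambda_0\definedas \lambda_a-\lambda_b$ and input $v\definedas v_a-v_b$ produces $y(t)\equiv 0$ on $\set{T}$. Since $\lambda_0\ne 0$, this contradicts $\set{V}=\{0\}$ and hence forces strong observability.

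The main obstacle will be the restriction $v(t)\in\subdiff\ell[t]\T$ in the definition of $\set{V}$, since the subdifferential of a convex function is generally not closed under subtraction, so $v_a-v_b$ need not be an admissible input. I expect to handle this by appealing to the standard linear-systems formulation in \citet{Trentelman2001}, where $v$ enters as an arbitrary unknown disturbance. Under that convention the superposition argument above goes through verbatim, and the adapted statement \eqref{eq:adjoint_system} inherits strong observability because any weak unobservability witnessed by a subdifferential-restricted input is \emph{a fortiori} witnessed by an arbitrary input. The only routine verification remaining is that the definition of the primer vector as $y=B\T\lambda$ fits the classical $(A\T,B\T)$ output-pair framework for which Theorem~7.16 of \citet{Trentelman2001} applies directly.
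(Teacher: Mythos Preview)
The paper does not supply a proof of this statement: it is quoted verbatim as \citet[Theorem~7.16]{Trentelman2001} and used as an external black-box result. In Trentelman's linear-systems setting (arbitrary disturbance input), strong observability is \emph{defined} as the property that $y\equiv 0$ on an interval forces the initial state to vanish regardless of the input, so the equivalence with $\set V=\{0\}$ is essentially immediate. Your contrapositive-plus-superposition argument is the standard way to connect this to the ``unique recovery of $\lambda(\tau_1)$ from $y$'' formulation, and it is correct in that unrestricted setting.

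Your worry about the subdifferential restriction is well placed, but your proposed resolution has the inclusion oriented the wrong way for what you need. You correctly observe $\set V_{\mathrm{restr}}\subseteq\set V_{\mathrm{unrestr}}$ (a restricted witness is \emph{a fortiori} an unrestricted one). However, the hypothesis of the theorem as stated is $\set V_{\mathrm{restr}}=\{0\}$, and you want to conclude strong observability, i.e.\ $\set V_{\mathrm{unrestr}}=\{0\}$; the inclusion above does not deliver that direction. The paper does not attempt to close this gap either: immediately after the theorem it explicitly replaces the subdifferential-constrained input by an unconstrained input taking values in the subspace $\range D=\linspan\bigcup_t\subdiff\ell(x(t))\T$ and calls this a ``conservative approximation.'' In other words, the paper in practice verifies $\set V_{\mathrm{unrestr}}=\{0\}$ (sufficient, and what Trentelman's algorithm actually computes), which then implies $\set V_{\mathrm{restr}}=\{0\}$ via your inclusion---not the reverse.
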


To verify Condition~\ref{condition:observability} via simple matrix algebra, it
is sufficient to apply the algorithm for computing $\set{V}$ in
\cite[Section~7.3]{Trentelman2001} using the following alternative to
\eqref{eq:adjoint_dynamics}:
\begin{equation}
  \dot\lambda(t) = -A\T\lambda(t)+Dv(t),
\end{equation}
where $v(t)\in\reals^n$ and
$\range D = \linspan\Union_{t\in [\tau_1,\tau_2]}\subdiff\ell(x(t))\T$. This
conservative approximation assumes that the input can come from a subspace
spanned by the subdifferentials. Section~\ref{section:example} uses this
approximation to verify Conditions
\ref{condition:observability}-\ref{condition:ambiguity} for the rocket landing
problem.

\section{Nonsmooth Maximum Principle}
\label{section:maximum_principle}

This section states a nonsmooth version of the maximum principle that we shall
use for proving Theorems~\ref{theorem:lcvx_a} and \ref{theorem:lcvx_b}. Consider
the following general optimal control problem:
\begin{poptimization}{center}{ocp_general}{}{u,t_f}{%
    \begin{array}{l}
      m(t_f,x(t_f))+\int_{\tzero}^{t_f}\ell(t,u(t),x(t))\dd t
    \end{array}}{$\mathcal G$}
  \dot x(t) = f(t,x(t),u(t)),\quad x(\tzero)=x_0, \#
  g(t,u(t))\le 0, \#
  b(t_f,x(t_f)) = 0.
\end{poptimization}
where the state trajectory $x(\cdot)$ is absolutely continuous and the control
trajectory $u(\cdot)$ is measurable. The dynamics
$f:\reals\times\reals^n\times\reals^m\to\reals^n$ are convex and continuously
differentiable. The terminal cost $m:\reals\times\reals^n\to\reals$, the running
cost $\ell:\reals\times\reals^m\times\reals^n\to\reals$, the input constraint
$g:\reals\times\reals^m\to\reals^{n_g}$, and the terminal constraint
$b:\reals\times\reals^n\to\reals^{n_b}$ are convex. Define the terminal manifold
as
$\set T\definedas\{x\in\reals^n:\textnormal{\eqref{eq:ocp_general_d} holds}\}$
and the Hamiltonian function:
\begin{eqnarray}
  \label{eq:ocp_nonsmooth_hamiltonian}
  H(t,x(t),u(t),\alpha,\psi(t))\definedas \alpha\ell[t]+\psi(t)\T f[t],
\end{eqnarray}
where $\alpha\le 0$ is the \textit{abnormal multiplier} and $\psi(\cdot)$ is the
\textit{adjoint variable} trajectory. We now state the nonsmooth maximum
principle, due to \cite[Theorem~8.7.1]{Vinter2000} (see also
\cite{Clarke2010,Hartl1995}), which specifies the necessary conditions of
optimality for Problem~\ref{problem:ocp_general}.

\begin{theorem}{}[Maximum Principle]
  \label{theorem:mp}
  Let $x(\cdot)$ and $u(\cdot)$ be optimal on the interval $[0,t_f]$. There
  exist a constant $\alpha\le 0$ and an absolutely continuous $\psi(\cdot)$
  such that the following conditions are satisfied:
  \begin{enumerate}
  \item Non-triviality:
    \begin{eqnarray}
      \label{eq:ocp_nonsmooth_non_triviality}
      (\alpha,\psi(t))\ne 0~\forall t\in [\tzero,t_f];
    \end{eqnarray}
  \item Pointwise maximum:
    \begin{eqnarray}
      \label{eq:ocp_nonsmooth_pointwise_maximum_argmax}
      u(t) = \argmax_{v\in\textnormal{\eqref{eq:ocp_general_c}}}
      H(t,x(t),v,\alpha,\psi(t))~\alev{[0,t_f]};
    \end{eqnarray}
  \item The differential equations and inclusions:
    \begin{subequations}
      \begin{align}
        \label{eq:ocp_nonsmooth_dynamics_state}
        \dot x(t)
        &= \grad_{\psi} H[t]\T~\alev{[\tzero,t_f]}, \\
        \label{eq:ocp_nonsmooth_dynamics_adjoint}
        \dot \psi(t)
        &\in -\subdiff[x] H[t]\T~\alev{[\tzero,t_f]}, \\
        \label{eq:ocp_nonsmooth_dynamics_hamiltonian}
        \dot H[t]
        &\in \subdiff[t] H[t]~\alev{[\tzero,t_f]};
      \end{align}
    \end{subequations}
  \item Transversality:
    \begin{subequations}
      \label{eq:ocp_nonsmooth_transversality}
      \begin{align}
        \label{eq:ocp_nonsmooth_transversality_adjoint}
        \psi(t_f) &\in \alpha\subdiff[x] m[t_f]\T+\normalcone{\set T}{x(t_f)}, \\
        \label{eq:ocp_nonsmooth_transversality_hamiltonian}
        0 &\in H[t_f]+\alpha\subdiff[t] m[t_f]+\normalcone{\set T}{t_f}.
      \end{align}
    \end{subequations}
  \end{enumerate}
\end{theorem}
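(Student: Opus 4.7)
The plan is to prove the nonsmooth maximum principle by reducing to a fixed-horizon smooth Mayer problem and then combining Ekeland's variational principle with the closed-graph calculus of the Clarke/limiting subdifferential, following the overall strategy of \cite{Vinter2000} and \cite{Clarke2010}. First I would absorb the running cost by augmenting the state with $x_0(\cdot)$ satisfying $\dot x_0(t)=\ell[t]$, $x_0(\tzero)=0$, and by replacing the objective with the pure endpoint term $\tilde m\definedas m(t_f,x(t_f))+x_0(t_f)$. The free terminal time is handled by rescaling $s=t/t_f$ on $[0,1]$ and appending $t_f$ as a frozen state with zero dynamics, so that the remaining nonsmoothness lies entirely in $\tilde m$, $f$, and the input constraint $g$, all of which are convex by hypothesis.

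Next I would regularize. Replace $\tilde m$ and the augmented integrand by their Moreau envelopes, smooth $f$ by a mollifier, and penalize the terminal manifold $\set T$ by a quadratic barrier of weight $\varepsilon\inv$. The regularized problem admits a minimizer $(u^\varepsilon,x^\varepsilon)$ close to the original optimum. Applying Ekeland's variational principle on $L^1([0,1];\reals^m)$ equipped with the Ioffe metric $d(u,v)=\mathrm{meas}\{s:u(s)\ne v(s)\}$ furnishes a true minimizer $(\hat u^\varepsilon,\hat x^\varepsilon)$ of a suitably perturbed functional. Standard needle variations at $(\hat u^\varepsilon,\hat x^\varepsilon)$ then produce a smooth Pontryagin system: abnormal multipliers $\alpha^\varepsilon\le 0$, adjoint trajectories $\psi^\varepsilon(\cdot)$, and penalty multipliers $\mu^\varepsilon$ satisfying the pointwise maximum of the smoothed Hamiltonian $H^\varepsilon$, the classical costate equations, and smooth transversality.

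Then I would pass to the limit $\varepsilon\downarrow 0$. Normalize so that $(\alpha^\varepsilon,\psi^\varepsilon(\tzero),\mu^\varepsilon)$ lies on the unit sphere and extract a weak-$\ast$ convergent subsequence. Uniform $L^1$ bounds on $\dot\psi^\varepsilon$ give equicontinuity, and Ascoli-Arzel\`a provides a uniformly convergent subsequence whose limit $\psi(\cdot)$ is absolutely continuous. Closed-graph properties of the limiting subdifferential replace the smoothed gradients $\grad\ell^\varepsilon$ and $\grad\tilde m^\varepsilon$ by selections $v\in\subdiff\ell$ and by an element of $\subdiff[x]m$, producing the state and costate inclusions \eqref{eq:ocp_nonsmooth_dynamics_state}--\eqref{eq:ocp_nonsmooth_dynamics_hamiltonian}. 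The limit of $\mu^\varepsilon$ yields elements of $\normalcone{\set T}{x(t_f)}$ and $\normalcone{\set T}{t_f}$, giving \eqref{eq:ocp_nonsmooth_transversality}. The pointwise maximum condition \eqref{eq:ocp_nonsmooth_pointwise_maximum_argmax} survives because both $H$ and the feasible set \eqref{eq:ocp_general_c} are jointly continuous in their arguments.

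The hardest step is establishing non-triviality \eqref{eq:ocp_nonsmooth_non_triviality} in the limit: normalized multipliers can concentrate in a single component and escape to zero elsewhere. Ruling this out requires combining upper semicontinuity of the limiting normal cone with the a priori bound on $\|\mu^\varepsilon\|$ produced by the penalty construction, ensuring that at every $t$ at least one of $\alpha$ or $\psi(t)$ survives the limit. A secondary subtlety is the Hamiltonian transversality \eqref{eq:ocp_nonsmooth_transversality_hamiltonian}, which emerges as the transversality condition for the frozen state $t_f$ in the rescaled problem; showing that its adjoint agrees with $H[t_f]+\alpha\subdiff[t] m[t_f]$ modulo $\normalcone{\set T}{t_f}$ is the content of Vinter's endpoint calculus for free-terminal-time problems and is where I expect to spend most of the bookkeeping effort.
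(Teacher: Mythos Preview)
The paper does not prove this theorem. It is stated as a known result and attributed to \cite[Theorem~8.7.1]{Vinter2000} (with \cite{Clarke2010,Hartl1995} as additional references); the paper then \emph{uses} the maximum principle as a black box in the proof of Lemma~\ref{lemma:lcvx}. There is therefore no in-paper proof to compare your attempt against.

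That said, your outline is broadly the strategy of the cited source: reduce to a fixed-horizon Mayer problem by state augmentation and time rescaling, penalize and regularize, apply Ekeland's principle in the Ekeland/Ioffe metric to obtain a true minimizer of a perturbed problem, derive smooth multipliers via needle variations, and pass to the limit using the closed-graph stability of the limiting subdifferential and normal cone. One small redundancy: you propose mollifying $f$, but in Problem~\ref{problem:ocp_general} the dynamics are already $\C{1}$, so only $m$, $\ell$, and the terminal constraint carry nonsmoothness. The two points you flag as delicate---preserving non-triviality in the limit and recovering \eqref{eq:ocp_nonsmooth_transversality_hamiltonian} from the adjoint of the frozen-time state---are indeed where the real work lies in Vinter's proof, so your assessment of where the effort goes is accurate.
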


\section{Lossless Convexification Proof}
\label{section:lcvx_proof}

This section proves Theorems~\ref{theorem:lcvx_a} and \ref{theorem:lcvx_b}. The
general outline is as follows. We first prove Theorem~\ref{theorem:lcvx_a} by
showing that (step 1) the solution of Problem~\ref{problem:rcp} is feasible for
Problem~\ref{problem:ocp}, and (step 2) the solution is globally optimal. We
then show Theorem~\ref{theorem:lcvx_b} via a proof by contradiction in which
Theorem~\ref{theorem:lcvx_a} is applied on each interval where the state
constraint is inactive.

\begin{lemma}
  \label{lemma:lcvx}
  The solution of Problem~\ref{problem:rcp} is feasible $\alev{[0,t_f]}$ for
  Problem~\ref{problem:ocp} if $x(t)\in\interior{\set X}$ and Conditions
  \ref{condition:observability}-\ref{condition:transversality} hold.

  \begin{proof}
    The proof uses the maximum principle from Theorem~\ref{theorem:mp}. Since
    there are two states, partition the adjoint variable as
    $\psi(t)=(\lambda(t)\in\reals^n,\eta(t)\in\reals)$. For
    Problem~\ref{problem:rcp} and $x(t)\in\interior{\mathcal X}$, the adjoint
    and Hamiltonian dynamics follow from
    \eqref{eq:ocp_nonsmooth_dynamics_adjoint} and
    \eqref{eq:ocp_nonsmooth_dynamics_hamiltonian}:
    \begin{subequations}
      \label{eq:proof_dynamics}
      \begin{align}
        \dot\lambda(t) &= -A\T\lambda(t)-\alpha v(t),~v(t)\in\subdiff\ell[t]\T,
                         ~\alev{[0,t_f]}, \label{eq:lambda_dynamics} \\
                         \dot\eta(t) &= 0~\alev{[0,t_f]}, \label{eq:eta_dynamics} \\
                         \dot H[t] &= 0~\alev{[0,t_f]}, \label{eq:hamiltonian_dynamics}
      \end{align}
    \end{subequations}

    Using the subdifferential basic chain rule
    \cite[Theorem~10.6]{Rockafellar1998}, the transversality condition
    \eqref{eq:ocp_nonsmooth_transversality} yields:
    \begin{subequations}
      \label{eq:proof_transversality}
      \begin{align}
        \label{eq:proof_transversality_lambda}
        \lambda(t_f) &= \grad_x m[t_f]^\transp\alpha+\grad_xb[t_f]^\transp\beta, \\
        \label{eq:proof_transversality_eta}
        \eta(t_f) &= \alpha\runningku, \\
        \label{eq:proof_transversality_H}
        H[t_f] &= -\grad_t m[t_f]\alpha,
      \end{align}
    \end{subequations}
    for some $\beta\in\reals^{n_b}$. Due to
    \eqref{eq:eta_dynamics}-\eqref{eq:hamiltonian_dynamics},
    \eqref{eq:proof_transversality_eta}-\eqref{eq:proof_transversality_H} and
    absolute continuity, we have \cite[Theorem~9]{Varberg1965}:
    \begin{subequations}
      \label{eq:constant_eta_H}
      \begin{align}
        \eta(t) &= \alpha\zeta,~\forall t\in[0,t_f], \label{eq:eta_constant} \\
        H[t] &= -\grad_t m[t_f]\alpha,~\forall t\in[0,t_f]. \label{eq:hamiltonian_constant}
      \end{align}
    \end{subequations}

    We claim that the primer vector $y(t)\ne 0~\alev{[0,t_f]}$. By
    contradiction, suppose there exists an interval
    $[\tau_1,\tau_2]\subseteq[0,t_f]$ for which
    $y(t)=0$. Condition~\ref{condition:observability} implies that
    $\lambda(\tau_1)=0$. Due to \eqref{eq:constant_eta_H}, this implies
    $\alpha(\ell[\tau_1]+\runningku\sum_{i=1}^M\sigma_i(\tau_1)+\grad_tm[t_f])=0$. Due
    to Condition~\ref{condition:transversality}, it must be that $\alpha=0$
    which implies $(\alpha,\psi(\tau_1))=0$. Since this violates non-triviality
    \eqref{eq:ocp_nonsmooth_non_triviality}, it must be that
    $y(t)\ne 0~\alev{[0,t_f]}$. Having eliminated the pathological case, assume
    $\alpha<0$. In particular, since the necessary conditions in
    Theorem~\ref{theorem:mp} are scale-invariant, we can set $\alpha=-1$ without
    loss of generality. The pointwise maximum condition
    \eqref{eq:ocp_nonsmooth_pointwise_maximum_argmax} implies that the following
    must hold $\alev{[0,t_f]}$: \def\opticmd{\argmax}
    \begin{optimization}{center}{argmax_1}{}{u_i,\gamma_i,\sigma_i}{%
        \begin{array}{l}
          \inlinesum_{i=1}^My(t)\T u_i(t)-\runningku\sigma_i(t)
        \end{array}}{}
      \textnormal{constraints \eqref{eq:rcp_d}-\eqref{eq:rcp_h} hold.}
    \end{optimization}

    We shall now analyze the optimality conditions of \eqref{eq:argmax_1}. For
    concise notation, the time argument $t$ shall be omitted. Expressing
    \eqref{eq:argmax_1} as a minimization and treating constraints
    \eqref{eq:rcp_f} and \eqref{eq:rcp_g} implicitly, we can write the
    Lagrangian of \eqref{eq:argmax_1} \cite{Boyd2004}:
    \begin{multline}
      \label{eq:lagrangian}
      \mathcal L(u_i,\gamma_i,\sigma_i,\lambda_{1\dots 4}^i) = \inlinesum_{i=1}^M\runningku\sigma_i-y\T u_i+
      \lambda_1^i(\norm{u_i}{2}-\sigma_i)+\\
      \lambda_2^i(\gamma_i\rho_1^i-\sigma_i)+
      \lambda_3^i(\sigma_i-\gamma_i\rho_2^i)+{\lambda_4^i}\T C_iu_i,
    \end{multline}
    where $\lambda_j^i\ge 0$ are Lagrange multipliers satisfying the following
    complementarity conditions:
    \begin{subequations}
      \label{eq:complementarity_conditions}
      \begin{align}
        \lambda_1^i(\norm{u_i}{2}-\sigma_i) &= 0, \label{eq:kkt_1} \\
        \lambda_2^i(\gamma_i\rho_1^i-\sigma_i) &= 0, \label{eq:kkt_2} \\
        \lambda_3^i(\sigma_i-\gamma_i\rho_2^i) &= 0, \label{eq:kkt_3} \\
        \lambda_4^i\circ C_iu_i &= 0. \label{eq:kkt_4}
      \end{align}
    \end{subequations}

    Next, the Lagrange dual function is given by:
    \begin{align}
      g(\lambda_{1\dots 4}^i)
      &= \inf_{u_i,\gamma_i,\sigma_i}
        \mathcal L(u_i,\gamma_i,\sigma_i,\lambda_{1\dots 4}^i) \nonumber \\
      &= \inlinesum_{i=1}^M\inf_{\sigma_i}\left[
        (\runningku+\lambda_3^i-\lambda_2^i-\lambda_1^i)\sigma_i\right]-
        \nonumber \\
      &\phantom{=}\hspace{1.4mm} \inlinesum_{i=1}^M\sup_{u_i}\left[(y-C_i\T\lambda_4^i)\T u_i-
        \lambda_1^i\norm{u_i}{2}\right]+ \nonumber \\
      &\phantom{=}\hspace{1.4mm} \textstyle\inf_{\textnormal{\eqref{eq:rcp_f},\eqref{eq:rcp_g}}}\inlinesum_{i=1}^M
        (\lambda_2^i\rho_1^i-\lambda_3^i\rho_2^i)\gamma_i.
        \label{eq:dual_function}
    \end{align}
    
    The dual function bounds the primal optimal cost from above. A non-trivial
    upper-bound requires:
    \begin{subequations}
      \begin{align}
        \norm{y-C_i\T\lambda_4^i}{2}\le\lambda_1^i, \label{eq:dual_function_result_1} \\
        \runningku+\lambda_3^i-\lambda_2^i-\lambda_1^i = 0, \label{eq:dual_function_result_2}
      \end{align}
    \end{subequations}
    where the first inequality is akin to the $\norm{\cdot}{2}$ conjugate
    function \cite[Example~3.26]{Boyd2004}. However, note that if
    \eqref{eq:dual_function_result_1} is strict then $\norm{u_i}{2}=0$ is
    optimal, which is trivially feasible for
    Problem~\ref{problem:ocp}. Substituting \eqref{eq:dual_function_result_2}
    into \eqref{eq:dual_function_result_1} gives the following condition for
    non-trivial solutions:
    \begin{equation}
      \label{eq:prelim_characteristic_equation_1}
      \norm{y-C_i\T\lambda_4^i}{2} = \runningku+\lambda_3^i-\lambda_2^i.
    \end{equation}

    Further simplification is possible by recognizing that a non-trivial
    solution implies $\gamma_i>0$. By Assumption~\ref{ass:norm_bounds},
    \eqref{eq:kkt_2} and \eqref{eq:kkt_3}, this means $\lambda_2^i>0$ and
    $\lambda_3^i>0$ cannot occur simultaneously. Furthemore,
    \eqref{eq:dual_function} reveals that $\gamma_i>0$ is not sub-optimal if and
    only if $\lambda_2^i\rho_1^i-\lambda_3^i\rho_2^i\le 0$. By this reasoning,
    $\lambda_2^i=0$ and $\lambda_3^i\ge 0$ are necessary for optimality. Thus,
    \eqref{eq:prelim_characteristic_equation_1} simplifies to:
    \begin{equation}
      \label{eq:prelim_characteristic_equation_2}
      \norm{y-C_i\T\lambda_4^i}{2} = \runningku+\lambda_3^i.
    \end{equation}

    Next, note that at optimality the left-hand side of
    \eqref{eq:prelim_characteristic_equation_2} equals the Eucledian projection
    onto $\set{U_i}$,
    i.e. $\norm{y-C_i\T\lambda_4^i}{2}=\Proj{\set{U_i}}{y}$. This can be shown
    by contradiction using Assumption~\ref{ass:full_range_cone},
    \eqref{eq:kkt_4} and that
    $u_i=\norm{u_i}{2}(y-C_i\T\lambda_4^i)/\norm{y-C_i\T\lambda_4^i}{2}$ in
    \eqref{eq:dual_function}. Note that the degenerate case of $u_i\ne 0$ and
    $\norm{y-C_i\T\lambda_4^i}{2}=0$ is eliminated in the discussion below which
    leverages Condition~\ref{condition:normality}. Thus
    \eqref{eq:prelim_characteristic_equation_2} simplifies to the following
    relationship, which we call the \textit{characteristic equation} of
    non-trivial solutions to Problem~\ref{problem:rcp}:
    \begin{equation}
      \label{eq:characteristic_equation}
      \Proj{\set{U_i}}{y} = \runningku+\lambda_3^i.
    \end{equation}

    Substituting \eqref{eq:characteristic_equation} into
    \eqref{eq:dual_function} yields:
    \begin{equation}
      \label{eq:dual_function_simplified}
      g(\lambda_{1\dots 4}^i) =
      -\textstyle\sup_{\textnormal{\eqref{eq:rcp_f},\eqref{eq:rcp_g}}}\inlinesum_{i=1}^{K'}
      (\Proj{\set{U_i}}{y}-\runningku)\rho_2^i\gamma_i,
    \end{equation}
    where we assume that the characteristic equation
    \eqref{eq:characteristic_equation} does not hold for $i=K'+1,\dots,M$ such
    that $\gamma_{i>K'}=0$. To facilitate discussion, define the $i$-th input
    \textit{gain} as in \eqref{eq:input_gain}. Note that $\Gamma_i\ge 0$ due to
    \eqref{eq:characteristic_equation}. Thus \eqref{eq:dual_function_simplified}
    becomes:
    \begin{equation}
      \label{eq:dual_function_gain}
      g(\lambda_{1\dots 4}^i) =
      -\textstyle\sup_{\textnormal{\eqref{eq:rcp_f},\eqref{eq:rcp_g}}}
      \inlinesum_{i=1}^{K'}\Gamma_i\gamma_i.
    \end{equation}

    Without loss of generality, assume a descending ordering
    $\Gamma_i\ge\Gamma_j$ for $i>j$. Let $K''\definedas\min\{K,K'\}$. By
    inspection of \eqref{eq:dual_function_gain}, the condition:
    \begin{equation}
      \label{eq:feasibility_condition}
      \Gamma_{K''}>0~\land~\Gamma_{K''}>\Gamma_{K''+1},
    \end{equation}
    is sufficient to ensure that it is optimal to set
    \begin{equation}
      \label{eq:gamma_optimality_structure}
      \gamma_i=
      \begin{cases}
        1 & \text{if }i\le K'', \\
        0 & \text{otherwise.}
      \end{cases}
    \end{equation}

    The lemma holds if \eqref{eq:feasibility_condition} holds
    $\alev{[0,t_f]}$. This is assured by Conditions~\ref{condition:normality}
    and \ref{condition:ambiguity}. Condition~\ref{condition:normality} case (a)
    assures $\Gamma_{K''}>0~\alev{[0,t_f]}$. If on some interval $\Gamma_k=0$,
    Condition~\ref{condition:normality} case (b) assures that $k>K''$. If
    $K''<K$ then due to $\Gamma_{K''}>0$ and the definition of $K'$, it must be
    that $\Gamma_{K''+1}=0\Rightarrow \Gamma_{K''}>\Gamma_{K''+1}$. Else if
    $K''=K$, Condition~\ref{condition:ambiguity} case (a) assures
    $\Gamma_{K}>\Gamma_{K+1}~\alev{[0,t_f]}$. If on some interval
    $\Gamma_k=\Gamma_{k+1}$, Condition~\ref{condition:ambiguity} case (b)
    assures that $k\ne K$.

    


    Thus, \eqref{eq:feasibility_condition} holds $\alev{[0,t_f]}$ and the lemma
    is proved. From \eqref{eq:gamma_optimality_structure}, the structure of the
    optimal solution is \textbf{bang-bang with at most $K$ inputs active}
    $\alev{[0,t_f]}$.
  \end{proof}
\end{lemma}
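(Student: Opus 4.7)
The plan is to apply the nonsmooth maximum principle (Theorem~\ref{theorem:mp}) to the relaxed Problem~\ref{problem:rcp} and extract structural information about any optimal $(u_i,\gamma_i,\sigma_i)$ trajectory. Because Problem~\ref{problem:rcp} carries two states, the original $x$ and the cost state $\xi$, I would partition the adjoint as $\psi=(\lambda,\eta)$. Working out the adjoint inclusion \eqref{eq:ocp_nonsmooth_dynamics_adjoint} on the interior of $\set X$ gives dynamics of the form $\dot\lambda=-A^\transp\lambda-\alpha v$ with $v\in\subdiff\ell[t]^\transp$, together with $\dot\eta=0$. Up to a sign normalization, the primer vector $y=B^\transp\lambda$ then satisfies exactly the adjoint system \eqref{eq:adjoint_system}. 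The transversality relations \eqref{eq:ocp_nonsmooth_transversality} pin down $\eta(t_f)=\alpha\zeta$ and the final Hamiltonian value; combined with $\dot\eta\equiv 0$ and $\dot H\equiv 0$, both $\eta$ and $H$ are constant on $[0,t_f]$.

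The first key step is to rule out the degenerate case $y\equiv 0$. If $y(t)=0$ on some interval $[\tau_1,\tau_2]$, strong observability (Condition~\ref{condition:observability}) forces $\lambda(\tau_1)=0$; combined with the constancy of $H$ and $\eta$, the Hamiltonian value collapses to $\alpha(\ell[\tau_1]+\zeta\sum_{i=1}^M\sigma_i(\tau_1)+\grad_t m[t_f])$, which by Condition~\ref{condition:transversality} can vanish only if $\alpha=0$, contradicting non-triviality \eqref{eq:ocp_nonsmooth_non_triviality}. Hence $y(t)\ne 0$ almost everywhere and I may set $\alpha=-1$ without loss of generality.

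With $\alpha=-1$, the pointwise maximum condition \eqref{eq:ocp_nonsmooth_pointwise_maximum_argmax} reduces at each $t$ to a static convex program: maximize $\sum_i(y^\transp u_i-\zeta\sigma_i)$ subject to \eqref{eq:rcp_d}-\eqref{eq:rcp_h}. I would form the Lagrangian with multipliers $\lambda_1^i,\dots,\lambda_4^i$ attached to \eqref{eq:rcp_e}, the two halves of \eqref{eq:rcp_d}, and \eqref{eq:rcp_h}, then compute the dual function and enforce boundedness. The $u_i$-infimum yields the conjugate-of-$\ell_2$ bound $\|y-C_i^\transp\lambda_4^i\|_2\le\lambda_1^i$; the $\sigma_i$-infimum forces $\zeta+\lambda_3^i-\lambda_2^i-\lambda_1^i=0$; and Assumption~\ref{ass:norm_bounds} together with complementarity rules out $\lambda_2^i>0$ whenever $\gamma_i>0$. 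Eliminating the multipliers and identifying $\|y-C_i^\transp\lambda_4^i\|_2$ with the Euclidean projection $\Proj{\set{U_i}}{y}$ (using Assumption~\ref{ass:full_range_cone}), I would arrive at the characteristic equation $\Proj{\set{U_i}}{y}=\zeta+\lambda_3^i$ for each active input. The dual then collapses to a linear program in $\gamma$: maximize $\sum_i\Gamma_i\gamma_i$ over the capped simplex \eqref{eq:rcp_f}-\eqref{eq:rcp_g}, with $\Gamma_i$ as in \eqref{eq:input_gain}.

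The final step, which I expect to be the main obstacle, is concluding bang-bang structure. A linear program over the capped simplex $\{0\le\gamma_i\le 1,\ \sum_i\gamma_i\le K\}$ is extreme-point optimal (hence $\gamma_i\in\{0,1\}$) provided the top $K$ values of $\Gamma_i$ are strictly positive and strictly separated from the remaining ones. The difficulty is showing this separation holds almost everywhere, not only generically. Conditions~\ref{condition:normality} and \ref{condition:ambiguity} are tailored exactly for this purpose: case (a) of each excludes the pathologies $\Gamma_i=0$ and $\Gamma_i=\Gamma_j$ outright a.e., while case (b) allows such coincidences only on intervals where enough strictly-larger gains dominate, so that the critical boundary index of the simplex is never the ambiguous one. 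Sorting the gains in descending order and combining both conditions, I would conclude $\gamma_i(t)\in\{0,1\}$ a.e.\ with at most $K$ inputs active, hence $\sigma_i(t)\in\{0\}\cup[\rho_1^i,\rho_2^i]$, and $\|u_i(t)\|_2=\sigma_i(t)$ after noting that slackness in \eqref{eq:rcp_e} is suboptimal. This is precisely feasibility for Problem~\ref{problem:ocp}.
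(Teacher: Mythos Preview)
Your proposal is correct and follows essentially the same route as the paper: apply the nonsmooth maximum principle with the partitioned adjoint $(\lambda,\eta)$, use Condition~\ref{condition:observability} together with Condition~\ref{condition:transversality} and non-triviality to rule out $y\equiv 0$ on an interval, then analyze the pointwise Hamiltonian maximization via Lagrangian duality to obtain the characteristic equation $\Proj{\set U_i}{y}=\zeta+\lambda_3^i$ and the reduced LP $\max\sum_i\Gamma_i\gamma_i$ over the capped simplex, and finally invoke Conditions~\ref{condition:normality}--\ref{condition:ambiguity} to force the bang-bang structure $\gamma_i\in\{0,1\}$ a.e. The paper's argument differs only in presentation details (it records the complementarity conditions explicitly and derives $\|u_i\|_2=\sigma_i$ from $\lambda_1^i>0$ rather than by appealing directly to suboptimality of slack in \eqref{eq:rcp_e}), but the substance is the same.
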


Lemma~\ref{lemma:lcvx} guarantees that Problem~\ref{problem:rcp} produces a
feasible solution of Problem~\ref{problem:ocp}. We will now show that this
solution is globally optimal, thus proving Theorem~\ref{theorem:lcvx_a}.

\begin{proof}[Proof of Theorem~\ref{theorem:lcvx_a}]
  The solution of Problem~\ref{problem:rcp} is feasible $\alev{[0,t_f]}$ for
  Problem~\ref{problem:ocp} due to Lemma~\ref{lemma:lcvx}. Furthermore, if
  $\runningku=0$ then the cost functions of Problems~\ref{problem:ocp} and
  \ref{problem:rcp} are the same. This is also true when $\runningku=1$ because
  Lemma~\ref{lemma:lcvx} guarantees that $\norm{u_i(t)}{2}=\sigma_i(t)$. The
  optimal costs thus satisfy
  $\optimal{J_{\mathcal O}}\le\optimal{J_{\mathcal R}}$. However, any solution
  of Problem~\ref{problem:ocp} is feasible for Problem~\ref{problem:rcp} by
  setting $\sigma_i(t)=\norm{u_i(t)}{2}$, thus
  $\optimal{J_{\mathcal R}}\le\optimal{J_{\mathcal O}}$. Therefore
  $\optimal{J_{\mathcal R}}=\optimal{J_{\mathcal O}}$ so the
  Problem~\ref{problem:rcp} solution is globally optimal for
  Problem~\ref{problem:ocp}~$\alev{[0,t_f]}$.
\end{proof}

Theorem~\ref{theorem:lcvx_a} implies that Problem~\ref{problem:ocp} is solved in
polynomial time by an SOCP solver applied to Problem~\ref{problem:rcp}. This can
be done efficiently with several numerically reliable SOCP solvers
\cite{Dueri2014}. Therefore the class of \NPhard problems defined by
Problem~\ref{problem:ocp} is in fact of \PP complexity if
$x(t)\in\interior{\set X}$ and
Conditions~\ref{condition:observability}-\ref{condition:transversality} hold.

\subsection{The Case of Active State Constraints}
\label{section:proof_active_state}

So far it has been assumed that the state constraint \eqref{eq:ocp_g} is
inactive. This section guarantees lossless convexification in a limited setting
when \eqref{eq:ocp_g} is activated at a discrete set of times. To begin, define
the \textit{interior time} and \textit{contact time} sets as follows:
\begin{subequations}
  \begin{align}
    \mathcal T_i &\definedas \{t\in (0,t_f): x(t)\in\interior{\mathcal X}\}, \label{eq:interior_times} \\
    \mathcal T_c &\definedas{} [0,t_f]\setminus\mathcal T_i. \label{eq:contact_times}
  \end{align}
\end{subequations}

A point $\tau$ of $\mathcal T_c$ is called an \textit{isolated point} if there
exists a neighborhood of $\tau$ not containing other points of $\mathcal T_c$
\cite{Stein2005}. A set of isolated points is called a \textit{discrete set} and
any discrete subset of a Eucledian space has measure zero
\cite{Acikmese2011}. We can now prove Theorem~\ref{theorem:lcvx_b}.

\begin{proof}[Proof of Theorem~\ref{theorem:lcvx_b}]
  The proof is similar to \cite[Corollary~3]{Acikmese2011}. To begin, let
  $\Sigma_O=\{\optimal{t_f},\optimal{x},\optimal{\xi},\optimal{u_i},\optimal{\gamma_i},\optimal{\sigma_i}\}$
  be the \textit{original solution} returned by Problem~\ref{problem:rcp}, which
  achieves the optimal cost value $\optimal{J_{\mathcal R}}$. Since
  $\mathcal T_c$ is a discrete set, for any consecutive contact times
  $\tau_1< \tau_2$ there exists a large enough real $a>0$ such that
  $\tau_1+1/a<\tau_2-1/a$. Let $\tau_e=\tau_1+1/a$ and $\tau_f=\tau_2-1/a$. Now
  consider solving Problem~\ref{problem:rcp} over $[\tau_e,\tau_e+\Delta\tau]$
  with $t_f=\Delta\tau$, $x_0=x(\tau_e)$, $b[t_f]=x(\Delta\tau)-x(\tau_f)$. Call
  the solution to this problem the \textit{subproblem solution}
  $\Sigma_S=\{\tilde{\Delta}\tau,\tilde{x},\tilde{\xi},\tilde{u}_i,\tilde{\gamma}_i,\tilde{\sigma}_i\}$,
  and let $\optimal{J_S}$ be the achieved optimal cost. We claim that the
  corresponding portion of $\Sigma_O$ must also achieve $\optimal{J_S}$. If it
  does not, the \textit{modified solution}
  $\Sigma_M=\{\hat t_f,\hat{x},\hat{\xi},\hat{u}_i,\hat{\gamma}_i,\hat{\sigma}_i\}$
  such that $\hat t_f=\optimal{t_f}+\tilde{\Delta}\tau-(\tau_f-\tau_e)$ and
  $\{\hat{x},\hat{\xi},\hat{u}_i,\hat{\gamma}_i,\hat{\sigma}_i\}=$
  \begin{equation*}
    \begin{cases}
      \{\optimal{x}(t),\optimal{\xi}(t),\optimal{u}_i(t),
      \optimal{\gamma}_i(t),\optimal{\sigma}_i(t)\}
      & \text{for }t\in [0,\hat t_f]\setminus \\
      & [\tau_e,\tau_e+\tilde{\Delta}\tau], \\
      \{\tilde{x}(t),\tilde{\xi}(t),\tilde{u}_i(t),
      \tilde{\gamma}_i(t),\tilde{\sigma}_i(t)\}
      & \text{for }t\in [\tau_e,\tau_e+\tilde{\Delta}\tau],
    \end{cases}
  \end{equation*}
  is also feasible for Problem~\ref{problem:rcp} and achieves a lower cost than
  $\optimal{J_{\mathcal R}}$, which contradicts that the $[\tau_e,\tau_f]$
  segment of $\Sigma_O$ is optimal. Thus, $\Sigma_S$ must be optimal for the
  original problem. By Theorem~\ref{theorem:lcvx_a}, $\Sigma_S$ must be globally
  optimal for Problem~\ref{problem:ocp}. Since $a$ is arbitrarily large,
  $\Sigma_S$ must be optimal for Problem~\ref{problem:ocp} over
  $t\in (t_1,t_2)$. Let $\mathcal T_c=\{\tau_i,~i=1,2,\dots\}$,
  $\tau_{i}<\tau_{i+1}$ $\forall i$. Hence
  $\interior{\mathcal T_i}=\Union_i(\tau_i,\tau_{i+1})$ and $\Sigma_O$ is
  globally optimal for Problem~\ref{problem:ocp} $\alev{\mathcal T_i}$. Since
  $\mathcal T_c$ is a discrete set, $\closure{\mathcal T_i}=[t_0,t_f]$ and so
  the Problem~\ref{problem:rcp} solution is globally optimal for
  Problem~\ref{problem:ocp} $\alev{[0,t_f]}$.
\end{proof}

\section{Numerical Example}
\label{section:example}

This section shows how rocket landing trajectories can be generated much faster
via Problem~\ref{problem:rcp} than MICP. Python source code for this example is
available online\footnote{\url{https://github.com/dmalyuta/lcvx}}. Consider the
in-plane rocket dynamics:
\begin{equation}
  \label{eq:rocket_dynamics}
  \dot x(t) = A(\omega)x(t)+B\inlinesum_{i=1}^Mu_i(t)+w,
\end{equation}
where the vehicle is treated as a point mass with $x(t)=(r(t),v(t))\in\reals^4$
the position and velocity state and $\omega\in\reals$ the planet rotation rate,
which is assumed to be constant and perpendicular to the trajectory
plane\footnote{This is done for simplicity in order to keep the motion planar. A
  general 3-dimensional angular velocity vector can also be considered.}. The
input $u_i(t)\in\reals^2$ represents an acceleration imparted on the rocket by a
gimballed thruster. The LTI matrices are:
\begin{equation}
  \label{eq:2}
  A(\omega)=
  \begin{bmatrix}
    0 & I \\
    \omega^2 I & 2\omega S
  \end{bmatrix},~
  B =
  \begin{bmatrix}
    0 \\ I
  \end{bmatrix},
  w =
  \begin{bmatrix}
    0 \\
    \omega^2 l+g
  \end{bmatrix},
\end{equation}
where $S=[0~1;-1~0]\in\reals^{2\times 2}$, $I\in\reals^{2\times 2}$ is identity,
$l\in\reals^2$ is the landing pad position with respect to the planet's center
of rotation, and $g\in\reals^2$ is the gravity vector. Note that the dynamics
assume constant mass and gravity for concision, but both can be made variable
within the lossless convexification framework \cite{Acikmese2007,Blackmore2012}.

The rocket is equipped with a single gimballed thruster which operates in two
modes: 1) low-thrust high-gimbal, and 2) high-thrust low-gimbal. A maximum
gimbal angle range of $\theta_i\in (0,\pi)$ is enforced via \eqref{eq:ocp_f} by
setting:
\begin{equation}
  \label{eq:gimbal_constraint}
  C_i =
  \begin{bmatrix}
    -\cos(\theta_i/2) & -\sin(\theta_i/2) \\
    \phantom{-}\cos(\theta_i/2) & -\sin(\theta_i/2)
  \end{bmatrix}.
\end{equation}

We also impose a glide slope constraint as in \cite{Blackmore2010} which
prevents the rocket from approaching the ground too closely prior to touchdown:
\begin{equation}
  \label{eq:glideslope}
  \mathcal X = \{
  x=(r,v)\in\reals^4 : \hat e_y\T r\ge \norm{r}{2}\sin(\gamma_{gs})
  \},
\end{equation}
where $\hat e_y=(0,1)\in\reals^2$ is the unit vector along the altitude axis. We
choose the following parameters, corresponding to a Martian divert maneuver
similar to \cite{Acikmese2007}:
\begin{gather*}
  M = 2,~K=1,~\omega=2\pi/88775~\si{\radian\per\second},~
  \rho_{1}^1=4~\si{\meter\per\second\squared}, \\
  \rho_{1}^2=8~\si{\meter\per\second\squared},~
  \rho_{2}^1=8~\si{\meter\per\second\squared},~
  \rho_{2}^2=12~\si{\meter\per\second\squared},~\theta_1=120~\si{\degree}, \\
  \theta_2=10~\si\degree,~\gamma_{gs}=10~\si{\degree},~
  l=(0,3396.2)~\si{\kilo\meter},~\runningku\in\{0,1\}, \\
  g=(0,-3.71)~\si{\meter\per\second\squared},~
  m[t_f]=(1-\zeta)t_f\xi_{\max}/t_{f,\max}, \\
  \ell(x(t))=10^{-3}\xi_{\max}(|r_1(t)|\tan(\gamma_{gs})+|r_2(t)|)/h_0, \\
  r(0)=(1500,h_0)~\si{\meter},~v(0)=(50,-70)~\si{\meter\per\second}, \\
  r(t_f)=(0,0)~\si{\meter},~v(t_f)=(0,0)~\si{\meter\per\second},
\end{gather*}
where $t_{f,\max}=100~\si{\second}$ is the time of flight upper-bound and
$\xi_{\max}=t_{f,\max}\rho_{2}^2$ is the maximum input integral cost. The
optimal cost is verified to be unimodal in $t_f$ such that golden search can be
applied to find the optimal $t_f$ \cite{Blackmore2010,Kochenderfer2019}. The
initial altitude above ground level (AGL) $h_0$ and $\zeta\in\{0,1\}$ are
independent variables that we shall vary. When $\zeta=0$, we solve for a
minimum-time trajectory, while for $\zeta=1$ we solve for a minimum-fuel
trajectory.

The problem satisfies
Conditions~\ref{condition:observability}-\ref{condition:transversality} under a
few light assumptions. Because the glide slope \eqref{eq:glideslope} maintains
the rocket above zero altitude, $\ell[t]>0~\forall t\in [0,t_f)$ such that
Condition~\ref{condition:transversality} holds irrespective of $m$. To check
Condition~\ref{condition:observability}, recognize that for our choice of
$\ell$:
\begin{equation}
  \label{eq:subdiff_state_penalty_rocket}
  \subdiff\ell[t]\T =
  D\subdiff[r]\ell[t]\T,~D\definedas \begin{bmatrix}
    I \\ 0
  \end{bmatrix}.
\end{equation}

Following the discussion in Section~\ref{subsec:discussion_observability}, we
confirm that the LTI system $\{-A\T,D,B\T,0\}$ is strongly observable, hence
Condition~\ref{condition:observability} holds. To check
Conditions~\ref{condition:normality} and \ref{condition:ambiguity}, we need to
make the following assumption because replacing $\subdiff[r]\ell[t]\T$ with
$\reals^2$ is too conservative.

\begin{assumption}
  \label{ass:non_zero_position_ae}
  The downrange and altitude are non-zero almost everywhere, i.e. $r_1(t)\ne 0$
  and $r_2(t)\ne 0$ $\alev[0,t_f]$.
\end{assumption}

Leveraging Assumption~\ref{ass:non_zero_position_ae} yields a piecewise constant
input to the adjoint system:
\begin{equation}
  \label{eq:grad_r_l_analytic}
  \subdiff[r]\ell[t]\T = \frac{10^{-3}\xi_{\max}}{h_0}\left\{
    \begin{bmatrix}
      \tan(\gamma_{gs}) \\ 1
    \end{bmatrix},
    \begin{bmatrix}
      -\tan(\gamma_{gs}) \\ 1
    \end{bmatrix}\right\}.
\end{equation}

Leveraging \eqref{eq:grad_r_l_analytic}, consider the following LTI system where
a constant input is modelled as a static state, yielding an augmented state
$\lambda'(t)\in\reals^6$:
\begin{subequations}
  \label{eq:adjoint_system_projected}
  \begin{align}
    \dot \lambda'(t)
    &=
      \begin{bmatrix}
        -A\T & D \\ \phantom{-}0 & \phantom{-}0
      \end{bmatrix} \lambda'(t)
             =A'\lambda'(t), \\
    y(t) &= \begin{bmatrix} B\T & 0 \end{bmatrix}\lambda'(t)
                                 =C'\lambda'(t).
  \end{align}
\end{subequations}

\begin{figure}
  \centering
  \begin{subfigure}{1\columnwidth}
    \centering
    \includegraphics[width=0.8\columnwidth]{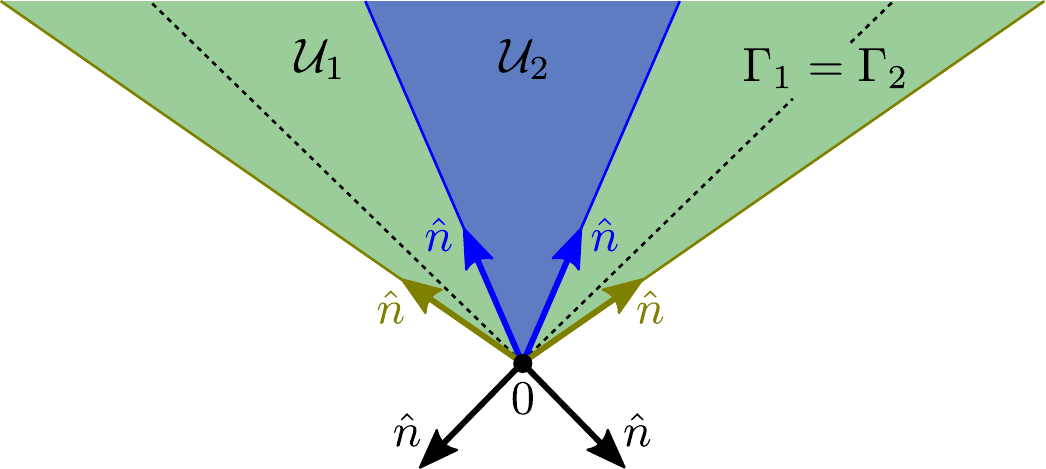}
    \caption{Illustration of the six vectors $\hat n$ that $\dot y(t)$ must not
      be normal to for Conditions~\ref{condition:normality} and
      \ref{condition:ambiguity} to hold.}
    \label{fig:cond_23_normals}
  \end{subfigure}

  \begin{subfigure}{1\columnwidth}
    \centering
    \includegraphics[width=0.8\columnwidth]{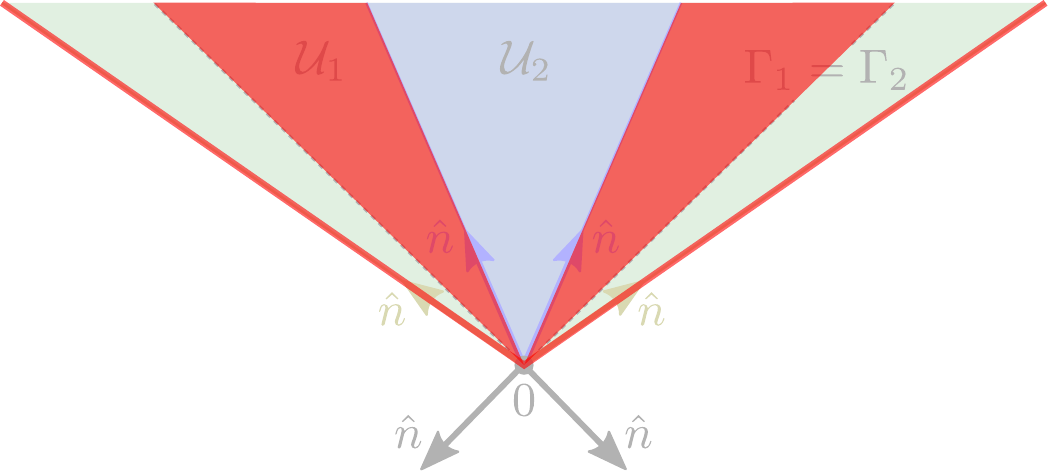}
    \caption{If the normality check fails, the optimal input could point in the
      directions highlighted in red.}
    \label{fig:cond_23_normals_input_evolution}
  \end{subfigure}
  \caption{Illustrated verification of Condition~\ref{condition:normality} and
    \ref{condition:ambiguity} when $\runningku=0$. If $\dot y(t)$ can evolve
    normal to any vector in (\protect\subref{fig:cond_23_normals}), the input
    can point in the directions shown in
    (\protect\subref{fig:cond_23_normals_input_evolution}) while violating
    \eqref{eq:ocp_d}.}
  \label{fig:cond_23}
\end{figure}

When $\runningku=0$, checking Conditions~\ref{condition:normality} and
\ref{condition:ambiguity} reduces to ensuring that $\dot y(t)=C'A' \lambda'(t)$
cannot evolve perpendicular to certain constant vectors $\hat n\in\reals^2$. The
values of $\hat n$ that need to be checked are illustrated in
Figure~\ref{fig:cond_23_normals}. To verify Conditions~\ref{condition:normality}
and \ref{condition:ambiguity}, we check the observability properties of the pair
$\{A',\hat n\T C'A'\}$, as in \cite{Malyuta2019}. Let $V_{\hat n}$ be a matrix
whose columns span the unobservable subspace. It turns out for the rocket
landing problem that $A'V_{\hat n}=0$ $\forall \hat n$.
Conditions~\ref{condition:normality} and \ref{condition:ambiguity} can thus be
violated only by a constant primer vector. If this occurs, the input is
constrained to point in the directions shown in
Figure~\ref{fig:cond_23_normals_input_evolution}. Notice that this constrains
the downrange acceleration to always have the same sign. The following
assumption requires the rocket to experience both acceleration \textit{and}
deceleration. The assumption is satisfied if, for example, the rocket is
initially travelling away from the landing site and has to reverse its velocity.

\begin{assumption}
  \label{ass:acceleration_sign_change}
  The downrange acceleration $\sum_{i=1}^Mu_{i,1}(t)$ changes sign at least once
  over $[0,t_f]$.
\end{assumption}

The assumption is sufficient for Theorem~\ref{theorem:lcvx_a} but not
Theorem~\ref{theorem:lcvx_b}, because a discontinuity in $\dot y(t)$ may occur
at $t\in\set T_c$ \eqref{eq:contact_times} \cite{Hartl1995}. If state
constraints are activated, a ``sufficiently rich'' gimbal history may be assumed
or Conditions~\ref{condition:normality} and \ref{condition:ambiguity} may be
verified \textit{a posteriori}, i.e. the solution is lossless if they hold.

When $\runningku=1$, Condition~\ref{condition:normality} requires
$\norm{y(t)}{2}\ne 1~\alev{[0,t_f]}$. Modal shape analysis for the pair
$\{A',C'\}$ reveals that, given a constant input in
\eqref{eq:grad_r_l_analytic}, $\norm{y(t)}{2}=1$ for an interval is only
possible if $y(t)$ is constant. This is eliminated by
Assumption~\ref{ass:acceleration_sign_change} with the same caveat about state
constraint activation. Checking Condition~\ref{condition:ambiguity} is not
possible \textit{a priori} when $\runningku=1$. The condition is verified
\textit{a posteriori}.

The dynamics \eqref{eq:rocket_dynamics} are discretized via zeroth-order hold on
a uniform temporal grid of $150$ nodes. Python~2.7.15 and ECOS~2.0.7.post1
\cite{Domahidi2013} are used on a Ubuntu~18.04.1 64-bit platform with a 2.5~GHz
Intel Core i5-7200U CPU and 8~GB of RAM. The solution and runtime are compared
to a MICP formulation where \eqref{eq:ocp_d} is implemented directly as a binary
constraint using Gurobi~8.1~\cite{gurobi}.

\begin{figure*}
  \centering
  \begin{subfigure}[b]{0.31\textwidth}
    \centering
    \includegraphics[width=\textwidth]{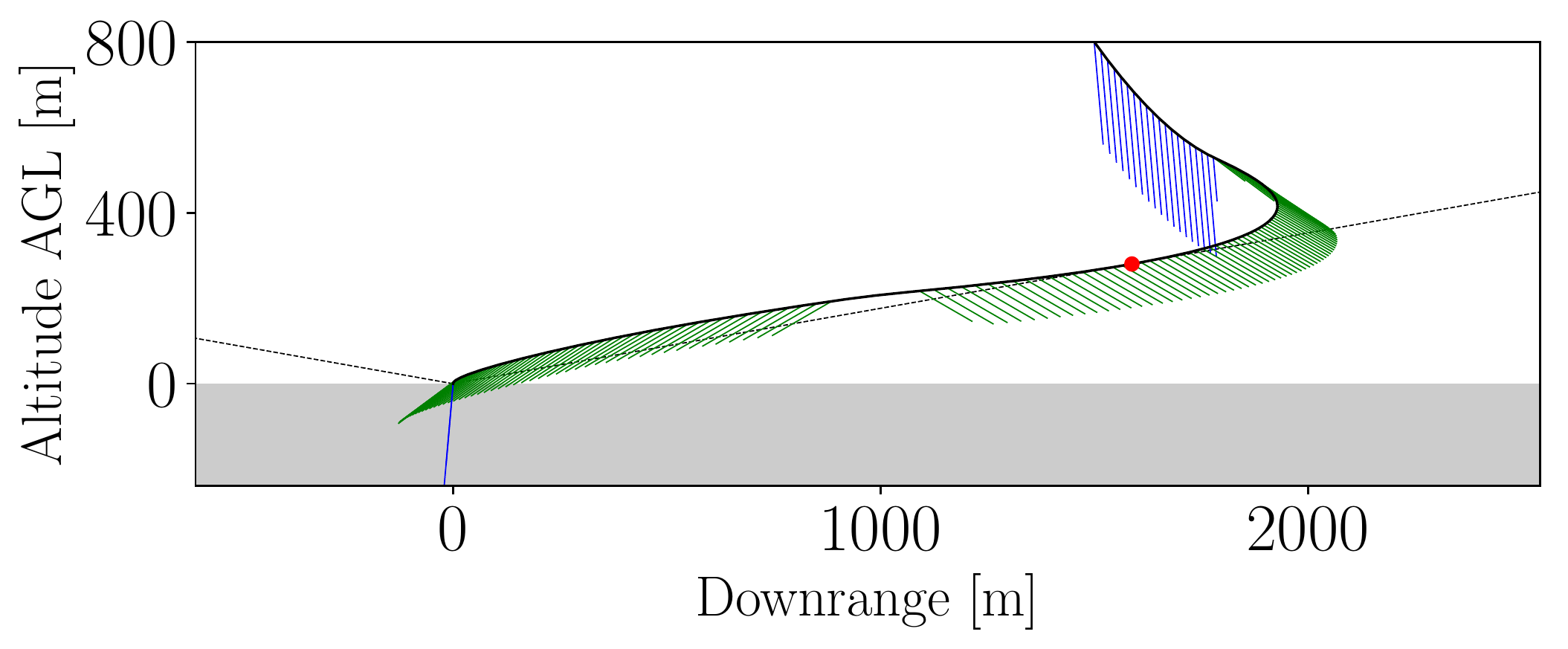}

    \includegraphics[width=\textwidth]{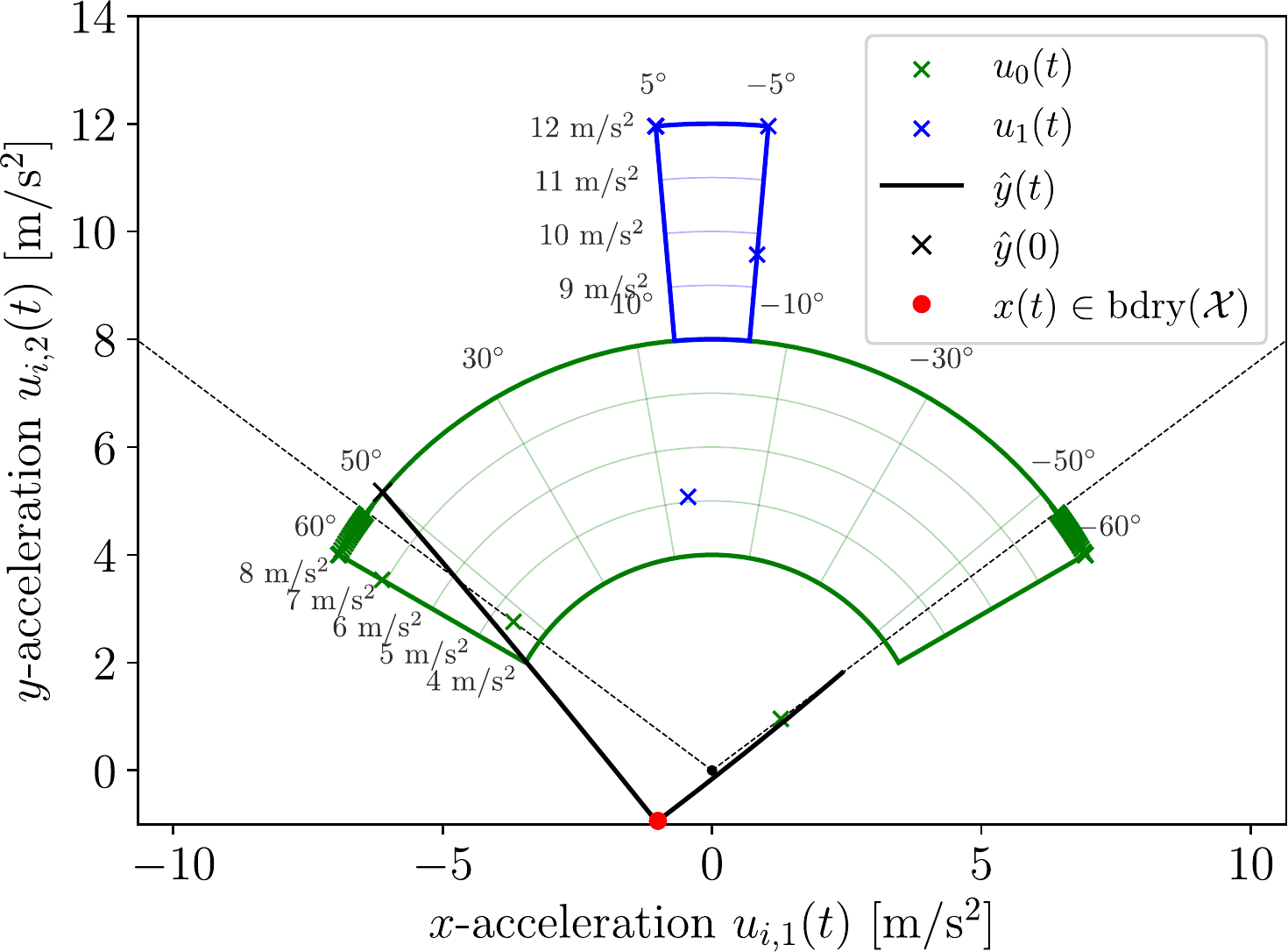}

    \includegraphics[width=\textwidth]{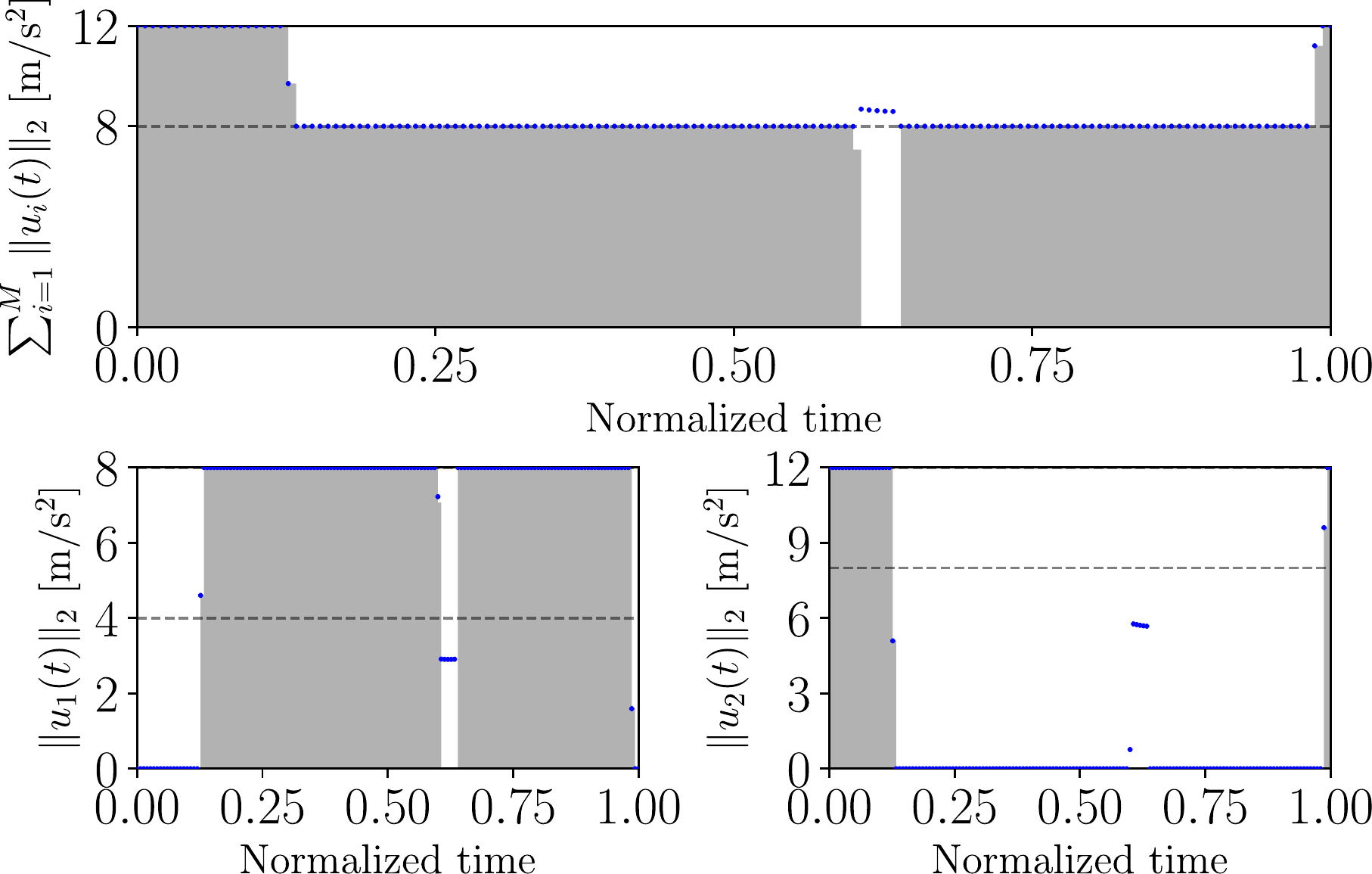}

    \includegraphics[width=\textwidth]{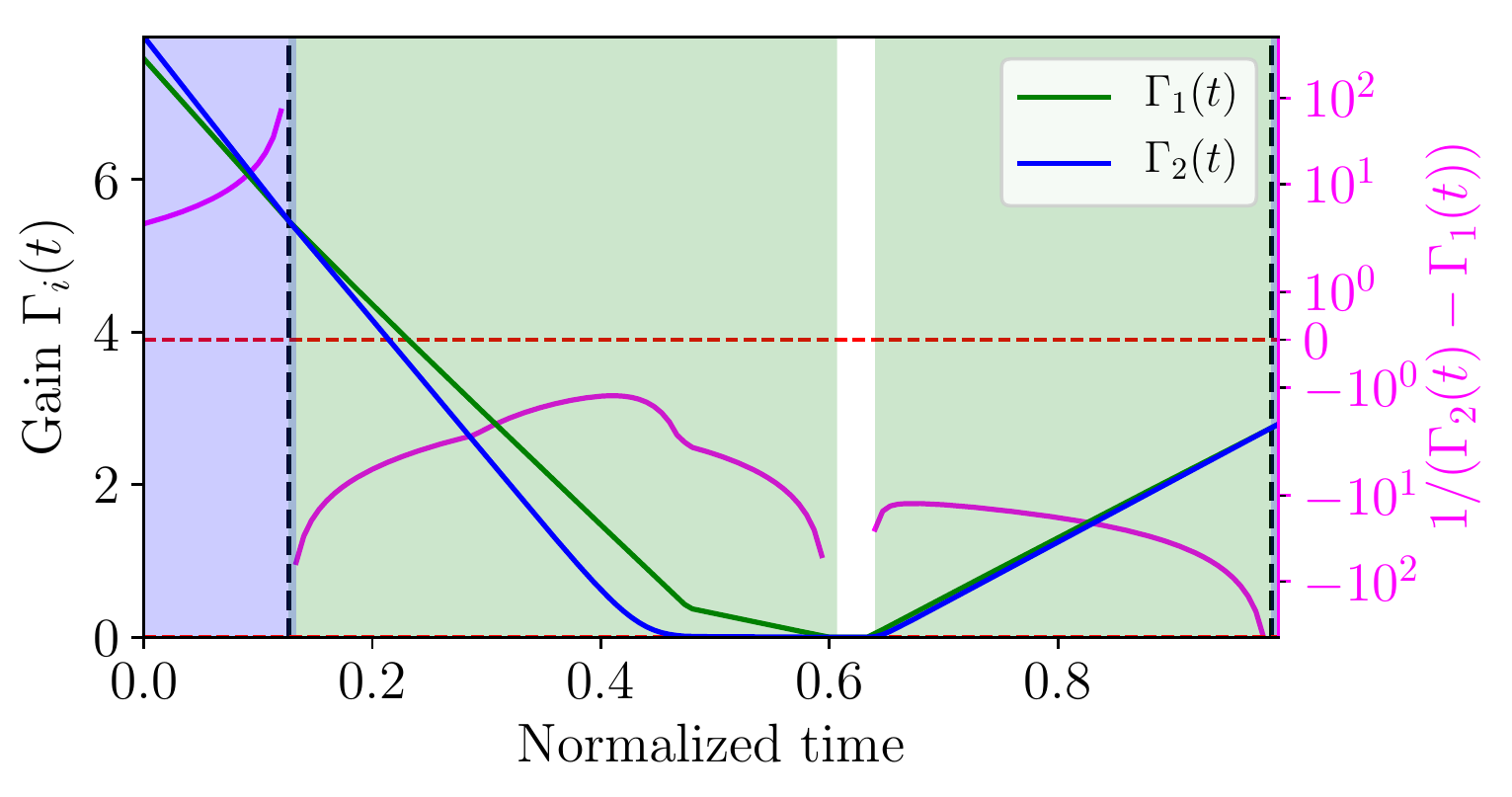}

    \caption{Landing from $h_0=800~\si{\meter}$ AGL, $\runningku=0$. Time of
      flight $t_f=46.93~\si{\second}$.}

    \label{fig:800agl_zeta0}
  \end{subfigure}%
  \hspace{0.005\textwidth}%
  \begin{subfigure}[b]{0.31\textwidth}
    \centering
    \includegraphics[width=\textwidth]{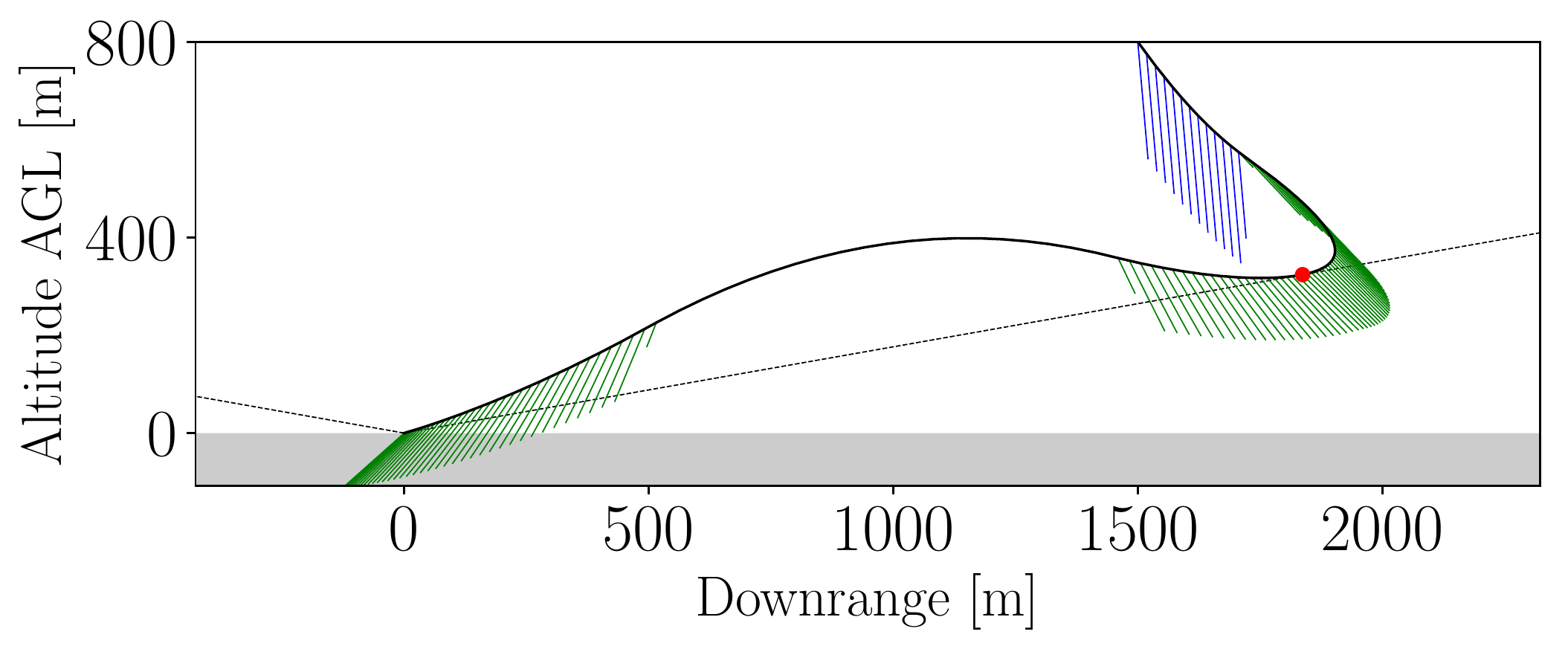}

    \includegraphics[width=\textwidth]{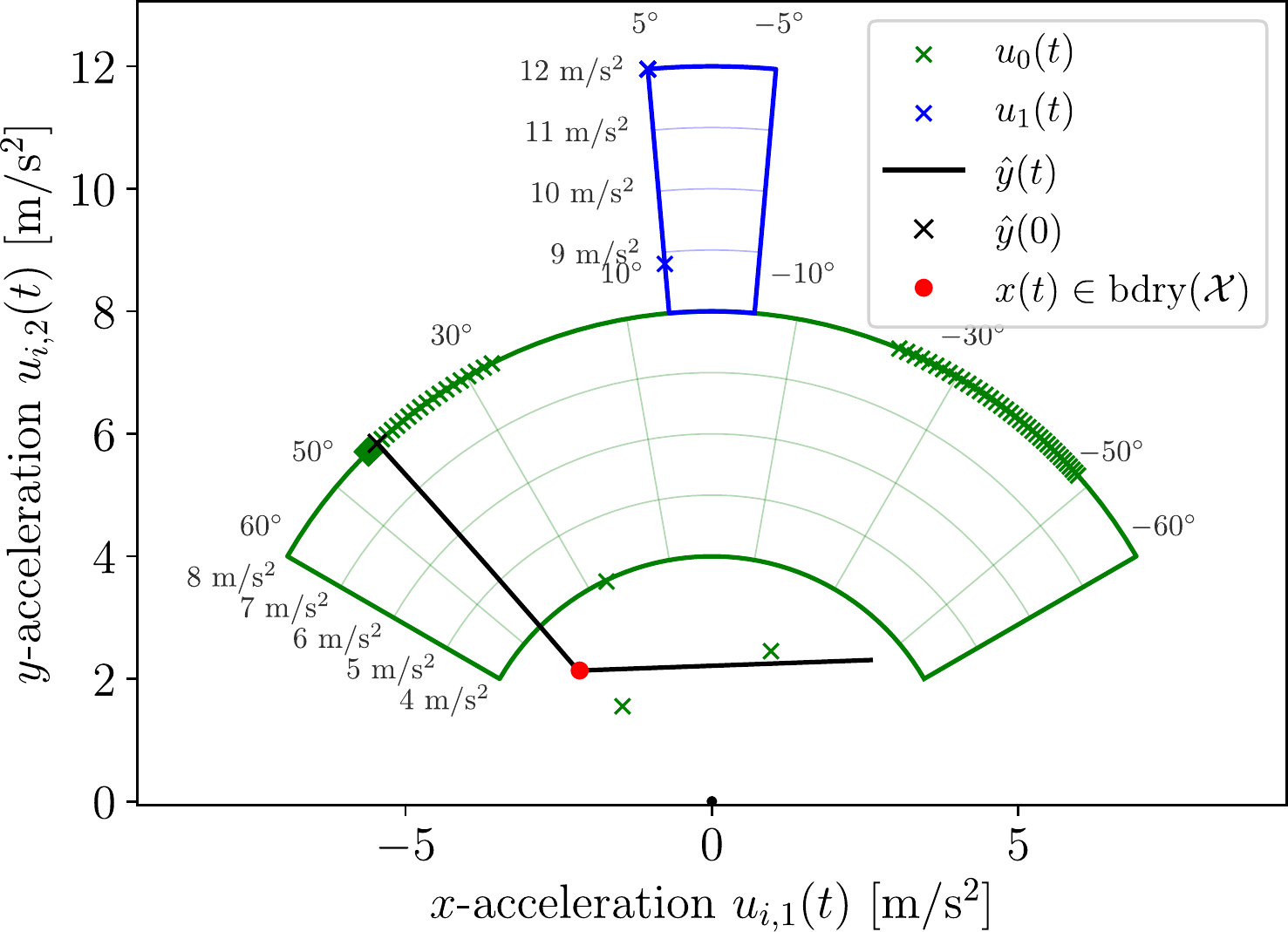}

    \includegraphics[width=\textwidth]{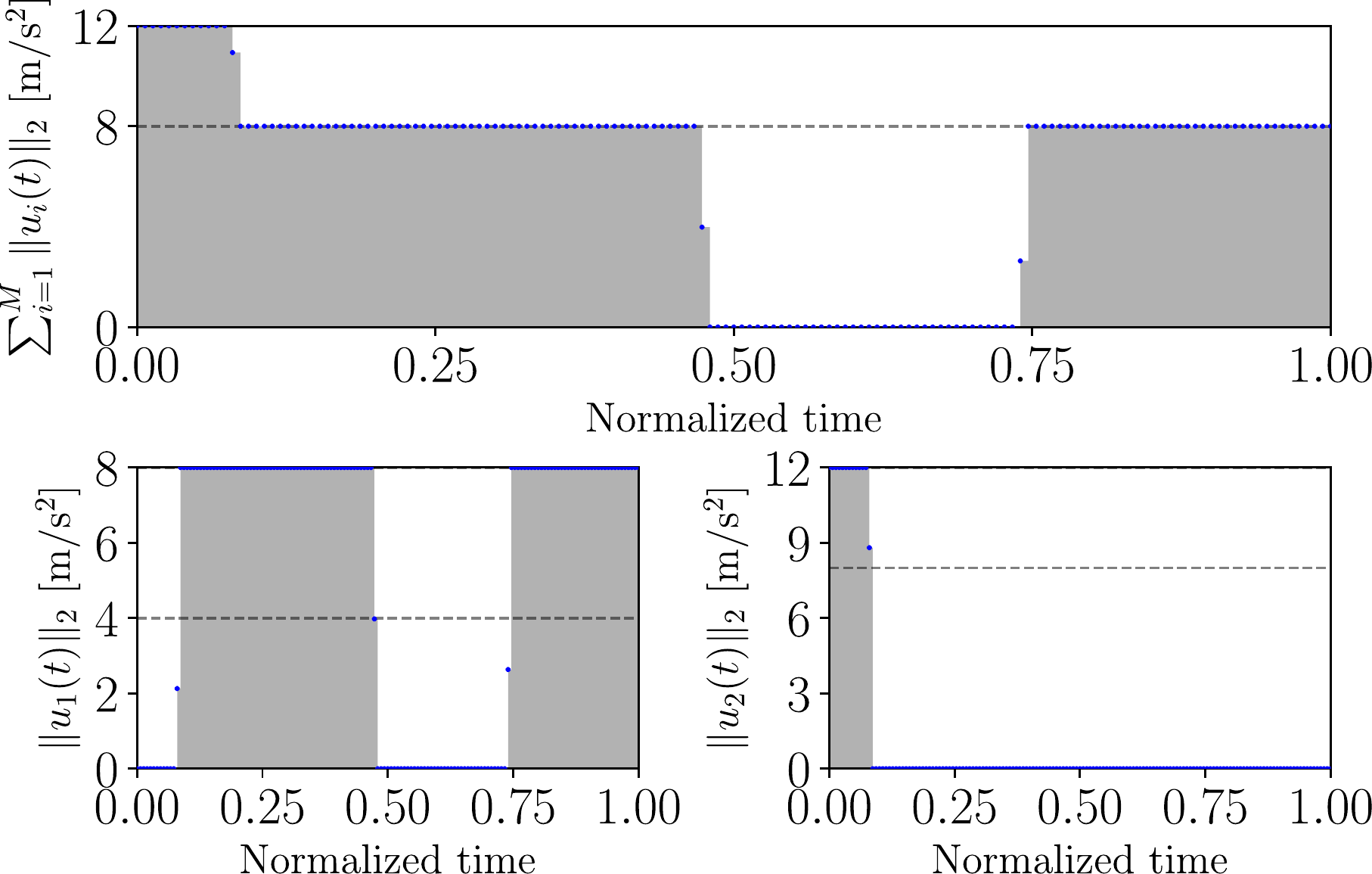}

    \includegraphics[width=\textwidth]{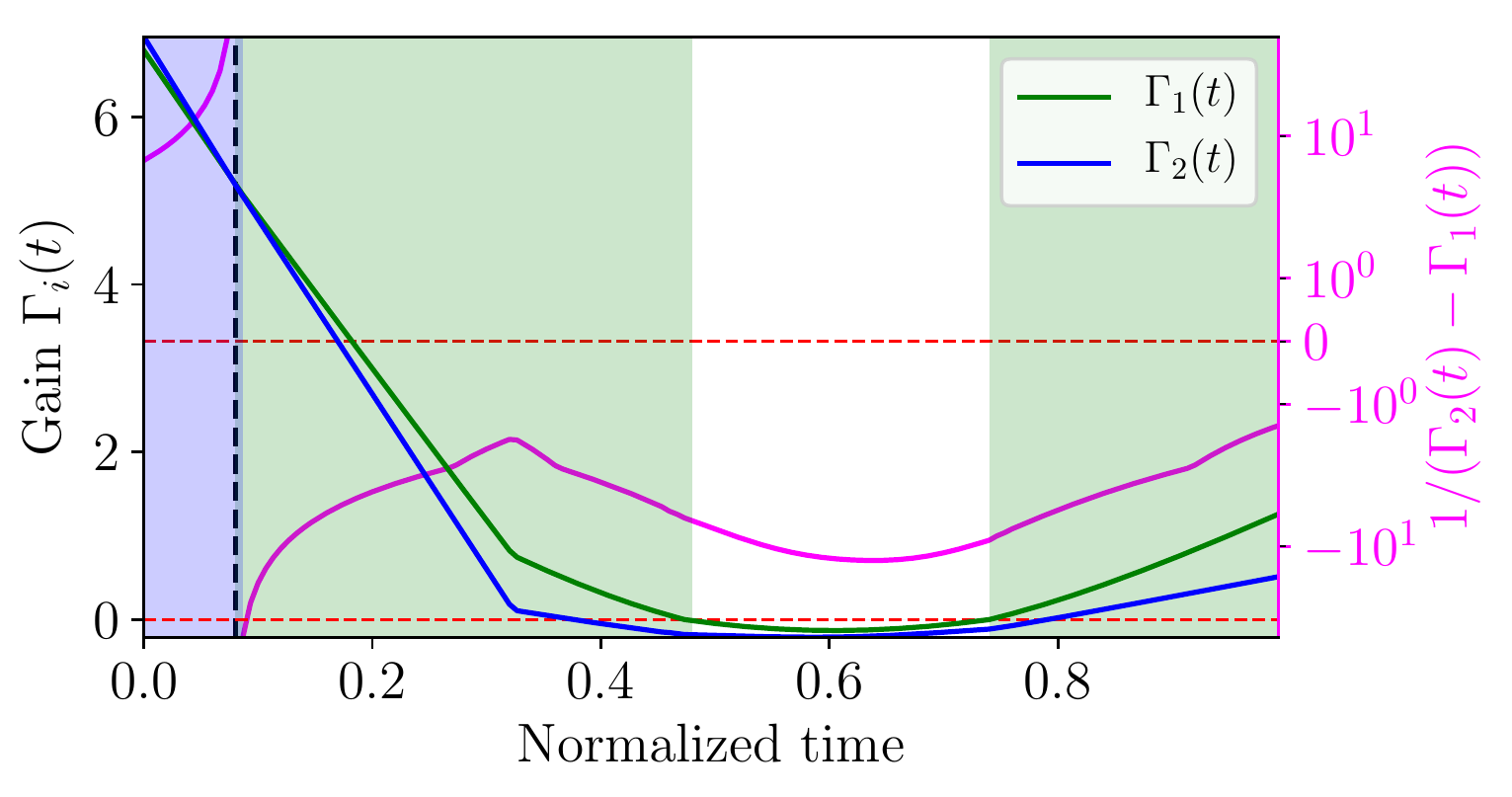}

    \caption{Landing from $h_0=800~\si{\meter}$ AGL, $\runningku=1$. Time of
      flight $t_f=53.97~\si{\second}$.}

    \label{fig:800agl_zeta1}
  \end{subfigure}%
  \hspace{0.005\textwidth}%
  \begin{subfigure}[b]{0.3188\textwidth}
    \centering
    \includegraphics[width=\textwidth]{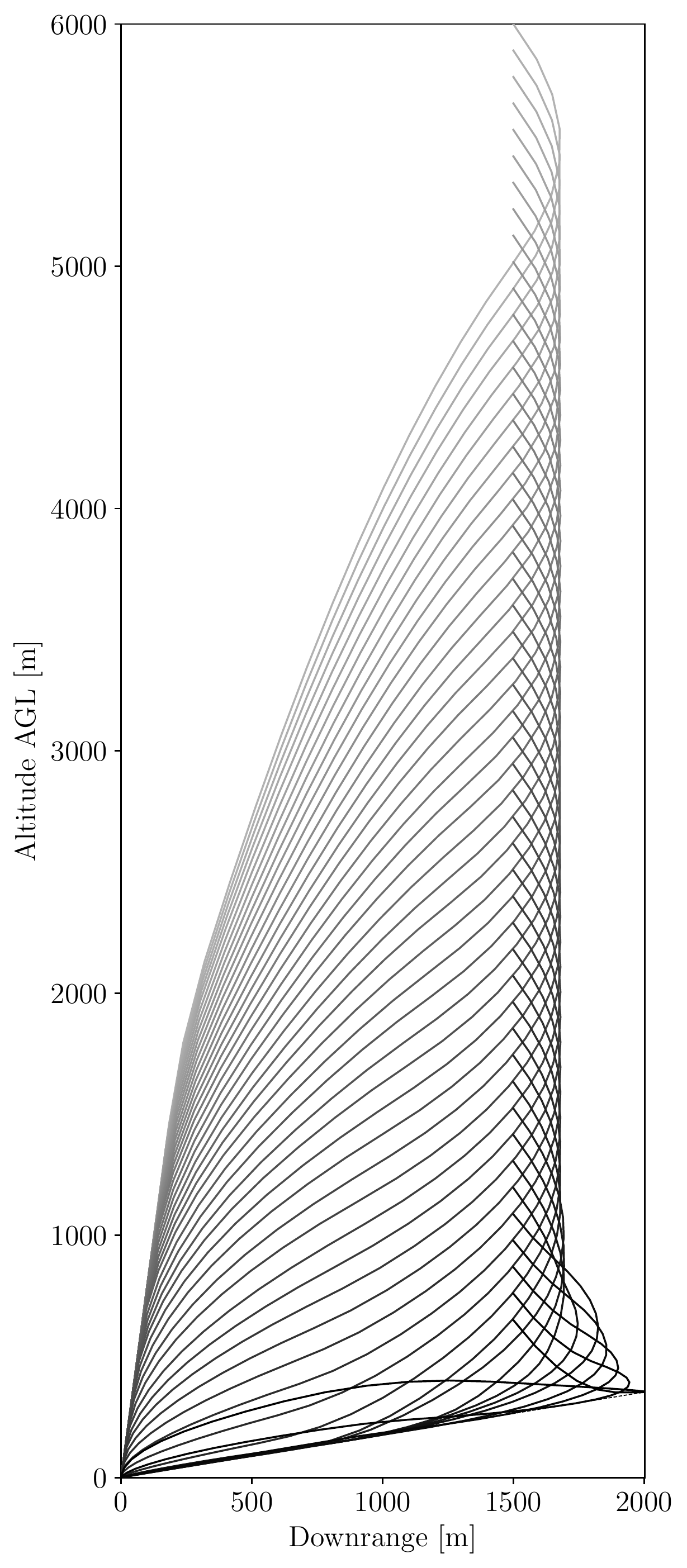}
    \caption{Trajectory sweep over $h_0\in [650,6000]~\si{\meter}$ AGL,
      $\runningku=0$ and $N=30$.}
    \label{fig:sweep}
  \end{subfigure}
  \caption{Landing trajectories computed by Problem~\ref{problem:rcp}. Green
    shows the high-gimbal low-thrust mode and blue shows the low-gimbal
    high-thrust mode. In (\protect\subref{fig:800agl_zeta0}) and
    (\protect\subref{fig:800agl_zeta1}), the top row shows the position
    trajectory with overlaid thrusts ($-u_i(t)$). Dotted lines show glide slope
    \eqref{eq:glideslope}. The second row shows the input with the (normalized)
    primer vector \eqref{eq:primer_vector}. Dotted lines show the equal-gain
    manifold $\Gamma_1(t)=\Gamma_2(t)$. The third row shows the input magnitude
    history. The bottom row shows each input's gain \eqref{eq:input_gain} and
    their difference. The background colour shows when the corresponding input
    is active. In (\protect\subref{fig:sweep}), landing trajectories are shown
    for a sweep over the initial altitude AGL.}
  \label{fig:trajectories}
\end{figure*}

\begin{table}
  \begin{center}
    \captionsetup{width=\columnwidth}
    \caption{Optimal cost and solver runtime when solving
      Problem~\ref{problem:rcp} versus MICP. Dashes show when MICP took too long
      to converge ($>10~\si{\minute}$ per iteration).}
    \label{table:performance_comparison}
    \begin{tabularx}{\columnwidth}{XCCCCC}
      $h_0~[\si{\meter}]$ &
      $\zeta$ &
      $\optimal{J_{\mathcal R}}$\vspace{0.7mm} &
      $\optimal{J_{\textnormal{MICP}}}$\vspace{0.7mm} &
      $t_{\mathcal R}~[\si{\second}]$ &
      $t_{\textnormal{MICP}}~[\si{\second}]$ \\ \hline \hline
      650 & 0 & 636.2 & -- & 2.9 & -- \\
      650 & 1 & 374.5 & -- & 2.4 & -- \\
      800 & 0 & 577.7 & 577.8 & 2.4 & 232.3 \\
      800 & 1 & 350.8 & 350.9 & 2.3 & 269.9 \\
      1000 & 0 & 548.9 & -- & 3.9 & -- \\
      1000 & 1 & 333.7 & 333.7 & 2.3 & 566.8 \\
      1500 & 0 & 493.4 & -- & 2.5 & -- \\
      1500 & 1 & 316.1 & 316.1 & 2.2 & 177.3 \\
      3000 & 0 & 558.0 & 558.0 & 2.5 & 73.1 \\
      3000 & 1 & 323.0 & 323.1 & 1.8 & 505.9 \\ \hline
    \end{tabularx}
  \end{center}
\end{table}

Figure~\ref{fig:trajectories} shows the resulting state, input and input gain
trajectories.  Let us first discuss Figures~\ref{fig:800agl_zeta0} and
\ref{fig:800agl_zeta1}. The top row shows the overall trajectory, from which we
note that Assumptions~\ref{ass:non_zero_position_ae} and
\ref{ass:acceleration_sign_change} are satisfied. The second and third rows show
that the input norm is feasible almost everywhere for
Problem~\ref{problem:ocp}. In particular, the thrust magnitude is bang-bang as
predicted in Lemma~\ref{lemma:lcvx}. The intermediate thrusts occuring at the
rising and falling edges in the third row are discretization artifacts. Recall
that the lossless convexification guarantee is only ``almost everywhere'' in
nature. These artifacts have been observed since the early days of lossless
convexification theory \cite{Acikmese2007}. Note the kink that occurs in the
$y(t)$ trajectory in the second row, which coincides with the glide slope state
constraint activation as highlighted by the red dot in the first row. Looking at
the third row, $\sigma_i(t)\ne\norm{u_i(t)}{2}$ as expected when $\runningku=0$
and both inputs are off, since there is no cost incentive to minimize
$\sigma_i(t)$. Note that optimality nevertheless requires $u_i(t)=0$, as
predicted by Lemma~\ref{lemma:lcvx}. Finally, the fourth row shows the
$\Gamma_i(t)$ trajectories. As predicted by
\eqref{eq:gamma_optimality_structure}, when $\Gamma_i(t)>\Gamma_j(t)$,
optimality forces input $\gamma_i(t)=1$ and $\gamma_j(t)=0$. 

Table~\ref{table:performance_comparison} compares the achieved optimal cost and
solver runtimes of lossless convexification versus a direct MICP implementation
of \eqref{eq:ocp_d}. One can see that the optimal cost values are
quasi-identical, with some slightly lower values for lossless convexification
due to the ``intermediate thrusts'' discussed above. More importantly, solving
Problem~\ref{problem:rcp} is up to two orders of magnitude faster than using
MICP. This is expected, since SOCP has polynomial time complexity in the problem
size while MICP has exponential time complexity. Furthermore, MICP was not able
to find a trajectory in several cases (the computation was aborted when runtime
exceeded 10~\si{\minute} for a single golden search iteration). The third column
of Figure~\ref{fig:trajectories} shows a sequence of 50 landing trajectories for
a sweep over $h_0\in [650,6000]~\si{\meter}$~AGL. Computing this sequence of 50
trajectories with $N = 150$ takes 130~\si{\second}, which is less than the
average MICP solution time for a single trajectory.

\section{Future Work}
\label{section:future_work}

Future work consists of expanding the class of problems that can be
handled. This includes considering different input norm types in
\eqref{eq:ocp_a} and \eqref{eq:ocp_c}, time-varying dynamics in
\eqref{eq:ocp_b}, a lower-bound $L\le\sum_{i=1}^M\gamma_i(t)$ in
\eqref{eq:ocp_e}, a constraint on the input rate of change $\dot u_i(t)$,
persistently active state constraints in \eqref{eq:ocp_g}, and removing the
discretization artifacts observed in Section~\ref{section:example}. A minor
caveat of the Lemma~\ref{lemma:lcvx} proof is that conditions which are proven
to hold ``almost everywhere'' are assumed not to fail on nowhere dense sets of
positive measure (e.g. the fat Cantor set) \cite{Morgan1990}. We do not expect
this pathology to occur for any practical problem, and in the future we seek to
rigorously eliminate this pathology.

\section{Conclusion}
\label{section:conclusion}

This paper presented a lossless convexification solution for a more general
class of optimal control problems with semi-continuous input norms than the one
handled in \cite{Malyuta2019}. By relaxing the problem to a convex one and
proving that the relaxed solution is globally optimal for the original problem,
solutions can be found via convex optimization in polynomial time. The resulting
algorithm is amenable to real-time onboard implementation and can also be used
to accelerate design trade studies.


\bibliography{references}

\begin{thebibliography}{25}
\providecommand{\natexlab}[1]{#1}
\providecommand{\url}[1]{\texttt{#1}}
\providecommand{\urlprefix}{URL }
\expandafter\ifx\csname urlstyle\endcsname\relax
  \providecommand{\doi}[1]{doi:\discretionary{}{}{}#1}\else
  \providecommand{\doi}{doi:\discretionary{}{}{}\begingroup
  \urlstyle{rm}\Url}\fi

\bibitem[{\Acikmese and Blackmore(2011)}]{Acikmese2011}
\Acikmese, B. and Blackmore, L. (2011).
\newblock Lossless convexification of a class of optimal control problems with
  non-convex control constraints.
\newblock \emph{Automatica}, 47(2), 341--347.
\newblock \doi{10.1016/j.automatica.2010.10.037}.

\bibitem[{\Acikmese et~al.(2013)\Acikmese, {Carson III}, and
  Blackmore}]{Acikmese2013}
\Acikmese, B., {Carson III}, J.M., and Blackmore, L. (2013).
\newblock Lossless convexification of nonconvex control bound and pointing
  constraints of the soft landing optimal control problem.
\newblock \emph{{IEEE} Transactions on Control Systems Technology}, 21(6),
  2104--2113.
\newblock \doi{10.1109/tcst.2012.2237346}.

\bibitem[{\Acikmese and Ploen(2007)}]{Acikmese2007}
\Acikmese, B. and Ploen, S.R. (2007).
\newblock Convex programming approach to powered descent guidance for {Mars}
  landing.
\newblock \emph{Journal of Guidance, Control, and Dynamics}, 30(5), 1353--1366.
\newblock \doi{10.2514/1.27553}.

\bibitem[{Blackmore et~al.(2012)Blackmore, \Acikmese, and {Carson
  III}}]{Blackmore2012}
Blackmore, L., \Acikmese, B., and {Carson III}, J.M. (2012).
\newblock Lossless convexification of control constraints for a class of
  nonlinear optimal control problems.
\newblock \emph{Systems {\&} Control Letters}, 61(8), 863--870.
\newblock \doi{10.1016/j.sysconle.2012.04.010}.

\bibitem[{Blackmore et~al.(2010)Blackmore, Acikmese, and
  Scharf}]{Blackmore2010}
Blackmore, L., Acikmese, B., and Scharf, D.P. (2010).
\newblock Minimum-landing-error powered-descent guidance for {Mars} landing
  using convex optimization.
\newblock \emph{Journal of Guidance, Control, and Dynamics}, 33(4), 1161--1171.
\newblock \doi{10.2514/1.47202}.

\bibitem[{Boyd and Vandenberghe(2004)}]{Boyd2004}
Boyd, S. and Vandenberghe, L. (2004).
\newblock \emph{Convex Optimization}.
\newblock Cambridge University Press.

\bibitem[{{Carson III} et~al.(2011){Carson III}, \Acikmese, and
  Blackmore}]{Carson2011}
{Carson III}, J.M., \Acikmese, B., and Blackmore, L. (2011).
\newblock Lossless convexification of powered-descent guidance with non-convex
  thrust bound and pointing constraints.
\newblock In \emph{Proceedings of the 2011 American Control Conference}.
  {IEEE}.
\newblock \doi{10.1109/acc.2011.5990959}.

\bibitem[{Clarke(2010)}]{Clarke2010}
Clarke, F. (2010).
\newblock The {Pontryagin} maximum principle and a unified theory of dynamic
  optimization.
\newblock \emph{Proceedings of the Steklov Institute of Mathematics}, 268(1),
  58--69.
\newblock \doi{10.1134/s0081543810010062}.

\bibitem[{Domahidi et~al.(2013)Domahidi, Chu, and Boyd}]{Domahidi2013}
Domahidi, A., Chu, E., and Boyd, S. (2013).
\newblock {ECOS}: An{ SOCP} solver for embedded systems.
\newblock In \emph{European Control Conference (ECC)}, 3071--3076. IEEE.

\bibitem[{Dueri et~al.(2014)Dueri, Zhang, and \Acikmese}]{Dueri2014}
Dueri, D., Zhang, J., and \Acikmese, B. (2014).
\newblock Automated custom code generation for embedded, real-time second order
  cone programming.
\newblock \emph{{IFAC} Proceedings Volumes}, 47(3), 1605--1612.
\newblock \doi{10.3182/20140824-6-za-1003.02736}.

\bibitem[{Gurobi~Optimization(2018)}]{gurobi}
Gurobi~Optimization, L. (2018).
\newblock Gurobi optimizer reference manual.

\bibitem[{Harris and \Acikmese(2013{\natexlab{a}})}]{Harris2013a}
Harris, M.W. and \Acikmese, B. (2013{\natexlab{a}}).
\newblock Lossless convexification for a class of optimal control problems with
  linear state constraints.
\newblock In \emph{52nd {IEEE} Conference on Decision and Control}. {IEEE}.
\newblock \doi{10.1109/cdc.2013.6761017}.

\bibitem[{Harris and \Acikmese(2013{\natexlab{b}})}]{Harris2013b}
Harris, M.W. and \Acikmese, B. (2013{\natexlab{b}}).
\newblock Lossless convexification for a class of optimal control problems with
  quadratic state constraints.
\newblock In \emph{2013 American Control Conference}. {IEEE}.
\newblock \doi{10.1109/acc.2013.6580359}.

\bibitem[{Harris and \Acikmese(2014)}]{Harris2014}
Harris, M.W. and \Acikmese, B. (2014).
\newblock Lossless convexification of non-convex optimal control problems for
  state constrained linear systems.
\newblock \emph{Automatica}, 50(9), 2304--2311.
\newblock \doi{10.1016/j.automatica.2014.06.008}.

\bibitem[{Hartl et~al.(1995)Hartl, Sethi, and Vickson}]{Hartl1995}
Hartl, R.F., Sethi, S.P., and Vickson, R.G. (1995).
\newblock A survey of the maximum principles for optimal control problems with
  state constraints.
\newblock \emph{{SIAM} Review}, 37(2), 181--218.
\newblock \doi{10.1137/1037043}.

\bibitem[{Kochenderfer and Wheeler(2019)}]{Kochenderfer2019}
Kochenderfer, M.J. and Wheeler, T.A. (2019).
\newblock \emph{Algorithms for Optimization}.
\newblock The MIT Press, Cambridge, Massachusetts.

\bibitem[{{Malyuta} et~al.(2019){Malyuta}, {Szmuk}, and
  {A\c{c}{\i}kme\c{s}e}}]{Malyuta2019}
{Malyuta}, D., {Szmuk}, M., and {A\c{c}{\i}kme\c{s}e}, B. (2019).
\newblock Lossless convexification of non-convex optimal control problems with
  disjoint semi-continuous inputs.
\newblock \emph{arXiv e-prints}, arXiv:1902.02726.

\bibitem[{{Morgan II}(1990)}]{Morgan1990}
{Morgan II}, J.C. (1990).
\newblock \emph{Point Set Theory}.
\newblock CRC Press.

\bibitem[{{MOSEK ApS}(2019)}]{MosekCookbook2019}
{MOSEK ApS} (2019).
\newblock \emph{{MOSEK} Modeling Cookbook}, 3.1 edition.

\bibitem[{Rockafellar and Wets(1998)}]{Rockafellar1998}
Rockafellar, R.T. and Wets, R.J.B. (1998).
\newblock \emph{Variational Analysis}.
\newblock Springer Berlin Heidelberg.
\newblock \doi{10.1007/978-3-642-02431-3}.

\bibitem[{Stein and Shakarchi(2005)}]{Stein2005}
Stein, E.M. and Shakarchi, R. (2005).
\newblock \emph{Real Analysis: Measure Theory, Integration, and Hilbert
  Spaces}.
\newblock Princeton University Press.

\bibitem[{Trentelman et~al.(2001)Trentelman, Stoorvogel, and
  Hautus}]{Trentelman2001}
Trentelman, H.L., Stoorvogel, A.A., and Hautus, M. (2001).
\newblock \emph{Control Theory for Linear Systems}.
\newblock Springer.

\bibitem[{Varberg(1965)}]{Varberg1965}
Varberg, D.E. (1965).
\newblock On absolutely continuous functions.
\newblock \emph{The American Mathematical Monthly}, 72(8), 831.
\newblock \doi{10.2307/2315025}.

\bibitem[{Vinter(2000)}]{Vinter2000}
Vinter, R. (2000).
\newblock \emph{Optimal Control}.
\newblock Birkhauser.

\bibitem[{Zhang et~al.(2017)Zhang, Wang, and Li}]{Zhang2017}
Zhang, Z., Wang, J., and Li, J. (2017).
\newblock Lossless convexification of nonconvex {MINLP} on the {UAV}
  path-planning problem.
\newblock \emph{Optimal Control Applications and Methods}, 39(2), 845--859.
\newblock \doi{10.1002/oca.2380}.

\end{thebibliography}

\end{document}
